\theoremstyle{plain}
\newtheorem{theorem}{Theorem}[section]
\newtheorem{lemma}[theorem]{Lemma}
\newtheorem{proposition}[theorem]{Proposition}
\newtheorem{corollary}[theorem]{Corollary}
\theoremstyle{definition}
\newtheorem{problem}[theorem]{Problem}
\newtheorem{problems}[theorem]{Problems}
\newtheorem{beispiel}[theorem]{Example}
\newenvironment{example}{\begin{beispiel}%
  \pushQED{\qed}}%
  {\popQED\end{beispiel}}
\numberwithin{equation}{section}
\DeclareMathOperator{\hdim}{hdim}
\DeclareMathOperator{\hspec}{hspec}
\begin{document}

\title[Hausdorff dimensions in analytic groups]{Hausdorff dimensions in
  $p$-adic analytic groups}

\author[B. Klopsch]{Benjamin Klopsch} \address{Benjamin Klopsch:
  Mathematisches Institut, Heinrich-Heine-Universit\"at, 40225
  D\"usseldorf, Germany} \email{klopsch@math.uni-duesseldorf.de}

\author[A. Thillaisundaram]{Anitha Thillaisundaram} 
\address{Anitha Thillaisundaram: School of Mathematics and Physics,
  University of Lincoln, LN6 7TS Lincoln, England}
\email{anitha.t@cantab.net}

\author[A. Zugadi-Reizabal]{Amaia Zugadi-Reizabal} 
\address{Amaia Zugadi-Reizabal: Department of Mathematics, University
  of the Basque Country UPV/EHU, 48080 Bilbao, Spain}
\email{amaia.zr@gmail.com}


\thanks{The second author acknowledges support from the
    Alexander von Humboldt Foundation and thanks
    Heinrich-Heine-Universit\"at D\"usseldorf for its hospitality.  The
    third author was supported by the Spanish Government, grant
    MTM2011-28229-C02-02, partly FEDER funds, and by the Basque
    Government, grant IT-460-10.}

  \keywords{Pro-$p$ groups, Hausdorff dimension, 
      filtration series, $p$-adic analytic groups, soluble groups}

  \subjclass[2010]{Primary 20E18; Secondary 20F16, 20G25, 22E20, 28A78}


\begin{abstract} 
  Let $G$ be a finitely generated pro-$p$ group, equipped with the
  $p$-power series $\mathcal{P} \colon G_i = G^{p^i}$,
  $i \in \mathbb{N}_0$. The associated metric and
  Hausdorff dimension function
    $\hdim_G^\mathcal{P} \colon \{ X \mid X \subseteq G \} \to [0,1]$
  give rise to
  \[
  \hspec^\mathcal{P}(G) = \{ \hdim_G^\mathcal{P}(H) \mid H \le G \}
  \subseteq [0,1],
  \]
  the Hausdorff spectrum of closed subgroups of~$G$.  In
  the case where $G$ is $p$-adic analytic, the Hausdorff dimension
  function is well understood; in particular, $\hspec^\mathcal{P}(G)$
  consists of finitely many rational numbers closely
    linked to the analytic dimensions of subgroups of~$G$.

  Conversely, it is a long-standing open question
  whether $\lvert \hspec^\mathcal{P}(G) \rvert < \infty$ implies that
  $G$ is $p$-adic analytic.  We prove that the answer is yes, in a
  strong sense, under the extra condition that $G$ is soluble.

  Furthermore, we explore the problem and related questions also for
  other filtration series, such as the lower $p$-series, the Frattini
  series, the modular dimension subgroup series and
  quite general filtration series. For instance, we
    prove, for $p>2$, that every countably based
    pro-$p$ group $G$ with an open subgroup mapping onto
    $\mathbb{Z}_p \oplus \mathbb{Z}_p$ admits a filtration
    series $\mathcal{S}$ such that $\hspec^\mathcal{S}(G)$ contains an
    infinite real interval.
\end{abstract}

\maketitle


\section{Introduction}
The notion of Hausdorff dimension was pioneered by Hausdorff and
developed systematically by Besicovitch and others starting from the
1930s.  It is central to the subject of fractal geometry;
compare~\cite{Fa90}.  More recently, the concept of Hausdorff
dimension has led to fruitful applications in the context of profinite
groups; e.g.,
see~\cite{BaSh97,BaKl03,Er04,AbVi05,JaKl07,Er10,FeZu14,Gl14,FeGiGo}.
Let $G$ be a countably based profinite group and fix a
\emph{filtration series} $\mathcal{S}$ of $G$, i.e., a descending
chain $G = G_0 \supseteq G_1 \supseteq \ldots$ of open normal
subgroups $G_i \trianglelefteq G$ such that $\bigcap_i G_i = 1$.  Such
a chain forms a base of neighbourhoods of the identity and, if $G$ is
infinite, induces a translation-invariant metric on~$G$ such that the
distance between $x,y \in G$ is
$d^\mathcal{S}(x,y) = \inf \left\{ \lvert G : G_i \rvert^{-1} \mid x
  \equiv y \pmod{G_i} \right\}$.
This in turn yields the \emph{Hausdorff dimension}
$\hdim_G^\mathcal{S}(U) \in [0,1]$ of any subset $U \subseteq G$, with
respect to~$\mathcal{S}$.

Based on work of Abercrombie~\cite{Abercrombie}, Barnea and
Shalev~\cite{BaSh97} gave the following `algebraic' interpretation of
the Hausdorff dimension of a closed subgroup $H$ of $G$:
\begin{equation} \label{equ:hdim-formula} \hdim_G^\mathcal{S}(H) =
  \varliminf_{i\rightarrow \infty} \frac{\log \lvert HG_i : G_i
    \rvert}{\log \lvert G : G_i \rvert};
\end{equation}
we are interested in Hausdorff dimension as a density
  function in this sense and the formula motivates the convenient
backup definition
$\hdim_G^\mathcal{S}(H) = \log \lvert H \rvert / \log \rvert G \lvert$
for finite groups~$G$.  The \emph{Hausdorff spectrum} of $G$, with
respect to $\mathcal{S}$, is
\[
 \hspec^\mathcal{S}(G) = \{ \hdim_G^\mathcal{S}(H) \mid
 H \le  G\} \subseteq [0,1]
\]
and reflects the range of Hausdorff dimensions of closed subgroups $H$ of~$G$.

Throughout we will be concerned with pro-$p$ groups, where $p$ denotes
a fixed prime.  Even for comparatively well behaved groups, such as
$p$-adic analytic pro-$p$ groups~$G$, the Hausdorff dimension function
and the Hausdorff spectrum of $G$ are known to be sensitive to the
choice of~$\mathcal{S}$; see~\cite[Ex.~2.5]{BaSh97}.  However, as
emphasised in~\cite{BaSh97}, for every finitely generated pro-$p$
group $G$ there is a rather natural choice of filtration, namely the
\emph{$p$-power series}
\[
\mathcal{P} \colon \pi_i(G) = G^{p^i} = \langle x^{p^i} \mid x \in G
\rangle, \quad i \in \mathbb{N}_0.
\]
This is substantiated by the following result, which has recently been
generalised by Fern\'{a}ndez-Alcober, Giannelli and
Gonz\'{a}lez-S\'{a}nchez~\cite{FeGiGo} to cover Hausdorff
dimensions of analytic subgroups of $R$-analytic profinite groups over
general pro-$p$ domains~$R$.

\begin{theorem}[Barnea and
  Shalev~\cite{BaSh97}] \label{BarneaShalevThm} Let $G$ be an infinite
  $p$-adic analytic pro-$p$ group.  Then every closed subgroup
  $H \le G$ satisfies
  \[
  \hdim_G^\mathcal{P}(H) = \dim (H) / \dim(G), 
  \]
  where $\dim(X)$ denotes the analytic dimension of a $p$-adic
  manifold~$X$.

  In particular,
  $\hspec^\mathcal{P}(G) \subseteq \{0, \nicefrac{1}{d},\nicefrac{2}{d}, \ldots,
  \nicefrac{d-1}{d}, 1 \}$, where $d = \dim(G) \ge 1$.
\end{theorem}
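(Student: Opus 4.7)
My plan is to reduce the Hausdorff dimension computation to a lattice-theoretic index estimate via the Lazard correspondence. Since $G$ is $p$-adic analytic, it contains an open normal uniform pro-$p$ subgroup $U \trianglelefteq G$ with $\dim U = \dim G = d$; set $c = \log_p |G:U|$. For any closed $H \le G$, the intersection $H \cap U$ is open in $H$, so $\dim(H \cap U) = \dim H =: e$. The first task is to show $\log_p |G:G^{p^i}| = id + O(1)$. The inclusion $U^{p^i} \le G^{p^i}$ together with the standard identity $|U:U^{p^i}| = p^{id}$ for uniform $U$ gives the upper bound $|G:G^{p^i}| \le p^{c+id}$. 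For the matching lower bound, every $g \in G$ satisfies $g^{p^c} \in U$ (by normality of $U$), so $g^{p^i} = (g^{p^c})^{p^{i-c}} \in U^{p^{i-c}}$ whenever $i \ge c$; since $U^{p^{i-c}}$ is normal in $G$, this forces $G^{p^i} \le U^{p^{i-c}}$ and hence $|G:G^{p^i}| \ge p^{c+(i-c)d}$.

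For the numerator $|HG^{p^i}:G^{p^i}| = |H:H \cap G^{p^i}|$, once $i$ is large enough that $G^{p^i} \subseteq U$, the index factorises as $|H:H\cap U| \cdot |(H\cap U):(H\cap U)\cap G^{p^i}|$, with the first factor bounded by $p^c$. The double inclusion $U^{p^i} \le G^{p^i} \le U^{p^{i-c}}$ then squeezes the second factor between $|(H\cap U):(H\cap U)\cap U^{p^{i-c}}|$ and $|(H\cap U):(H\cap U)\cap U^{p^i}|$. Here the Lazard correspondence enters: the uniform group $U$ corresponds to a powerful $\mathbb{Z}_p$-Lie lattice $L \cong \mathbb{Z}_p^d$, the closed subgroup $H \cap U$ corresponds to a $\mathbb{Z}_p$-sublattice $M \le L$ of rank $e$, and the subgroups $U^{p^j}$ correspond to the scaled lattices $p^j L$. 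The elementary divisor theorem furnishes a basis $(l_1,\ldots,l_d)$ of $L$ and exponents $a_1,\ldots,a_e \ge 0$ such that $(p^{a_k} l_k)_{k=1}^{e}$ is a basis of $M$, whence $|M : M \cap p^j L| = p^{\sum_k \max(0,\, j - a_k)} = p^{je + O(1)}$ for $j$ large.

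Assembling both estimates yields $\hdim_G^{\mathcal{P}}(H) = \lim_{i \to \infty}(ie + O(1))/(id + O(1)) = e/d = \dim(H)/\dim(G)$, with the limit existing in place of the $\varliminf$; the ``in particular'' clause then follows because $\dim(H) \in \{0, 1, \ldots, d-1, d\}$. The main obstacle I anticipate is verifying the Lazard dictionary at the required level of detail---that closed subgroups of the uniform pro-$p$ group $U$ correspond bijectively to closed $\mathbb{Z}_p$-Lie sublattices of $L$, that this identifies $U^{p^j}$ with $p^j L$, and that the $p$-adic analytic dimension of a closed subgroup equals the $\mathbb{Z}_p$-rank of the corresponding sublattice---but these are standard outputs from the theory of uniform pro-$p$ groups and can be imported wholesale.
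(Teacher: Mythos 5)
The paper does not prove this theorem---it is quoted from Barnea and Shalev \cite{BaSh97}---so there is no in-paper argument to compare against; I am judging your proposal on its own. Your reduction is the standard and correct one: the two-sided estimate $U^{p^{i}}\le G^{p^{i}}\le U^{p^{i-c}}$, the resulting computation $\log_p\lvert G:G^{p^i}\rvert = id+O(1)$, and the factorisation of the numerator through $H\cap U$ are all fine, as is the elementary-divisor count $\lvert M:M\cap p^jL\rvert=p^{je+O(1)}$ once one genuinely has a rank-$e$ sublattice in hand.

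The gap is exactly at the point you flagged and then waved away: it is \emph{not} a standard output of the theory that closed subgroups of a uniform pro-$p$ group $U$ correspond bijectively to $\mathbb{Z}_p$-Lie sublattices of the associated lattice $L$. The equivalence in \cite[Ch.~9]{DDMS99} is between \emph{uniform} subgroups and \emph{powerful} Lie sublattices; an arbitrary closed subgroup $H\cap U$ need not be closed under the additive operation $x+y=\lim_n(x^{p^n}y^{p^n})^{p^{-n}}$, so ``the sublattice $M$ corresponding to $H\cap U$'' is not defined, and the identification $\lvert (H\cap U):(H\cap U)\cap U^{p^j}\rvert=\lvert M:M\cap p^jL\rvert$ has no referent. (The subgroup--subalgebra correspondence is known to fail outside restricted settings, e.g.\ when $\dim U$ is large relative to $p$.) The repair is a second sandwich: take an open uniform subgroup $V\le H\cap U$, which does correspond to a powerful sublattice of rank $e$ and gives $V^{p^j}\subseteq (H\cap U)\cap U^{p^j}$, hence the upper bound $p^{je+O(1)}$ on the index; and for the lower bound take the saturation $\widetilde H=\exp\bigl(L\cap(\mathbb{Q}_p\log(H\cap U))\bigr)$, a uniform subgroup of dimension $e$ containing $H\cap U$ with finite index and satisfying $\widetilde H\cap U^{p^j}=\widetilde H^{\,p^j}$, so that $(H\cap U)\cap U^{p^j}\subseteq \widetilde H^{\,p^j}$ and the index is at least $p^{je-O(1)}$. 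With that substitution your argument closes; as written, the pivotal step rests on a false correspondence.
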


It remains an open problem whether this theorem can actually be turned
into a characterisation of $p$-adic analytic pro-$p$ groups in terms
of Hausdorff spectra, in the spirit of~\cite[Interl.~A]{DDMS99}.
Contrary to what is perhaps suggested in the
introduction of~\cite{BaSh97}, one has to be rather careful, as the
assertion in Theorem~\ref{BarneaShalevThm} does not generally remain
valid with respect to other standard filtration series.  In passing,
we mention another relevant result from~\cite{BaSh97}, which
in turn relies on a theorem of Zelmanov~\cite{Ze92}: a
finitely generated pro-$p$ group $G$ is $p$-adic analytic if and only
if $G$ contains no infinite closed subgroup $H$ of Hausdorff dimension
$\hdim_G^\mathcal{P}(H) = 0$ with respect to the $p$-power
series~$\mathcal{P}$.

In this paper we consider, in addition to the $p$-power filtration,
three other natural and commonly used filtration series on finitely
generated pro-$p$ groups.  We recall that the \emph{lower $p$-series}
(sometimes called lower $p$-central series) of a finitely generated
pro\nobreakdash-$p$ group $G$ is given recursively by
\begin{align*}
  \mathcal{L} \colon P_1(G) = G,  %
  & \quad \text{and} \quad  P_i(G) = P_{i-1}(G)^p
    \, [P_{i-1}(G),G] \quad
    \text{for $i \geq 2$,} \\
  \intertext{while the \emph{Frattini series} of $G$ is
  given recursively by} 
  \mathcal{F} \colon \Phi_0(G) = G, %
  & \quad \text{and} \quad \Phi_i(G) = \Phi_{i-1}(G)^p
    \, [\Phi_{i-1}(G),\Phi_{i-1}(G)] \quad \text{for $i \geq 1$.} \\
  \intertext{The (modular) \emph{dimension subgroup series} (sometimes
  called Jennings or Zassenhaus series) of $G$ is closely
  related to the filtration of the group ring $\mathbb{F}_pG$ by powers
  of its augmentation ideal; it can be defined recursively
  by}
  \mathcal{D} \colon D_1(G) = G, %
  & \quad \text{and} \quad D_i(G) = D_{\lceil i/p \rceil}(G)^p 
    \prod_{1 \le j <i} [D_j(G),D_{i-j}(G)] \quad \text{for $i \geq 2$.}  
\end{align*}

Theorem~\ref{BarneaShalevThm} motivates a number of questions, related
to, but more specific than Problem~1 in~\cite{BaSh97}.

\begin{problem}
  Given any two filtration series $\mathcal{S}_1, \mathcal{S}_2$ of a
  finitely generated pro-$p$ group~$G$, is it true that
  $\hspec^{\mathcal{S}_1}(G)$ is finite if and only if
  $\hspec^{\mathcal{S}_2}(G)$ is finite?
\end{problem}

We solve this problem in the negative, by constructing for $p > 2$ a
tailor-made filtration series $\mathcal{S}$ of the abelian pro-$p$
group $G \cong \mathbb{Z}_p\oplus \mathbb{Z}_p$ such that
$\hspec^\mathcal{S}(G)$ contains an infinite real interval.  By
contrast, the Hausdorff spectrum
$\hspec^\mathcal{P}(G) = \{ 0, \nicefrac{1}{2}, 1 \}$ of the same
group with respect to the $p$-power series and indeed its spectra with
respect to other conventional filtration series are discrete; compare
Proposition~\ref{pro:close-to-p-power}.  More generally, we establish
the following result.

\begin{theorem} \label{thm:maps-onto-Zp2} Let $G$ be a countably based
  pro-$p$ group that has an open subgroup mapping
    surjectively onto $\mathbb{Z}_p \oplus \mathbb{Z}_p$.  Then there
  exists a filtration series $\mathcal{S}$ of $G$ such that
  $\hspec^\mathcal{S}(G)$ contains the complete real interval
  $[\nicefrac{1}{p+1}, \nicefrac{p-1}{p+1}]$.
\end{theorem}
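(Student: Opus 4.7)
\textbf{Reduction to $A = \mathbb{Z}_p \oplus \mathbb{Z}_p$.} The plan is to first construct a filtration $\mathcal{A}$ of $A$ whose Hausdorff spectrum contains the target interval, and then lift it to the general~$G$. Given $G$ with open subgroup $H \le G$ and continuous surjection $\pi \colon H \twoheadrightarrow A$, set $N := \ker \pi$ and define a filtration of~$H$ by $H_i = \pi^{-1}(A_i) \cap N_{f(i)}$, where $(N_j)$ is a descending cofinal chain of open normals of~$H$ contained in~$N$ (e.g.\ derived from the $p$-power series of~$N$) and~$f$ is chosen so that $\log|N : N_{f(i)}| = o\bigl(\log|A : A_i|\bigr)$. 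Prepending a finite chain from~$G$ to a normal core of~$H$ yields a filtration~$\mathcal{S}$ of~$G$. A routine verification shows that for every closed $M \le A$ the preimage $\pi^{-1}(M)$ has $\hdim_G^{\mathcal{S}}(\pi^{-1}(M)) = \hdim_A^{\mathcal{A}}(M)$, so $\hspec^{\mathcal{S}}(G) \supseteq \hspec^{\mathcal{A}}(A)$ and it suffices to construct~$\mathcal{A}$.

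\textbf{Tree-walk description on $A$.} I would work in the Bruhat--Tits tree $T$ of $\mathrm{SL}_2(\mathbb{Q}_p)$: its vertices are homothety classes of rank-$2$ $\mathbb{Z}_p$-lattices in $\mathbb{Q}_p^2$, its edges come from index-$p$ sublattice inclusions, so that $T$ is $(p+1)$-regular and $\partial T \cong \mathbb{P}^1(\mathbb{Q}_p)$ parametrises the primitive rank-$1$ direct summands $H_\alpha \le A$. Any filtration $\mathcal{A}\colon A = A_0 \supset A_1 \supset \cdots$ with $[A_{i-1}:A_i] = p$ is an infinite walk from~$[A]$ in~$T$, determined at each step by the choice of a line $d_i \in \mathbb{P}^1(\mathbb{F}_p)$ in $A_{i-1}/pA_{i-1}$. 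A short induction---using that $H_\alpha \cap A_i$ is always generated by a primitive vector of~$A_i$---gives
\[
  [H_\alpha : H_\alpha \cap A_i] \;=\; p^{k_i(\alpha)}
  \qquad\text{and}\qquad
  \hdim_A^{\mathcal{A}}(H_\alpha) \;=\; \varliminf_{i\to\infty} \frac{k_i(\alpha)}{i},
\]
where $k_i(\alpha) := \#\{ j \le i : d_j \ne \ell_j(\alpha)\}$ and $\ell_j(\alpha) \in \mathbb{P}^1(\mathbb{F}_p)$ is the unique direction from~$A_{j-1}$ toward $\alpha$ in~$T$.

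\textbf{Designing the walk and main obstacle.} To make $\alpha \mapsto \hdim(H_\alpha)$ realise every value of $[\nicefrac 1{p+1}, \nicefrac{p-1}{p+1}]$, I would construct $(d_i)$ block-by-block. Fix a super-exponentially growing sequence $(n_k)$ and a countable dense set $\{c_k\} \subset [\nicefrac 1{p+1}, \nicefrac{p-1}{p+1}]$; in block $B_k = [n_k, n_{k+1})$ cycle through the $p+1$ possible directions with a rational frequency vector tailored so that some inductively chosen end $\alpha_k \in \mathbb{P}^1(\mathbb{Z}_p)$ satisfies $k_{n_{k+1}}(\alpha_k)/n_{k+1} \approx c_k$. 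A diagonal/closure argument, invoking compactness of~$\mathbb{P}^1(\mathbb{Z}_p)$ and local constancy of each $k_i$ in~$\alpha$, then fills in every real $c$ in the interval as $\hdim(H_{\alpha(c)})$ for a suitable accumulation end $\alpha(c)$. The principal obstacle will be precisely this blockwise combinatorial design: a single walk must simultaneously serve infinitely many density targets across distinct branches of~$T$, and controlling the $\varliminf$ (rather than a $\lim$) along sparse subsequences is what forces the extremes $\nicefrac 1{p+1}$ and $\nicefrac{p-1}{p+1}$ to emerge as the natural boundary frequencies tied to the $(p+1)$-ary branching of~$T$; the lower endpoint arises from the most-aligned regime the walk can sustain for an end, and the upper endpoint from the next-best-aligned regime still compatible with the boundary varying continuously.
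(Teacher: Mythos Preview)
Your reduction step has a slip: $H_i = \pi^{-1}(A_i) \cap N_{f(i)}$ equals $N_{f(i)}$, since $N_{f(i)} \subseteq N \subseteq \pi^{-1}(A_i)$. You need instead a filtration mapping onto $A_i$ whose intersection with $N$ is controlled; the paper's Proposition~\ref{pro:shifting-hspec} does exactly this, producing $G_i = K_{h(i)} G_i^{**}$ from an auxiliary filtration with $G_i^{**}\varphi = A_i$, and in fact obtains the stronger conclusion $\hspec^{\mathcal{S}}(G) = \hspec^{\mathcal{A}}(A)$. With that fix your reduction matches the paper's.

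On $A$ itself the two approaches diverge sharply. The paper does not refine to index-$p$ steps or invoke the tree at all; it writes down one explicit coarse filtration $G_i = \langle (p^{p^i}, i\,p^{p^i}),\, (0, p^{p^{i+1}}) \rangle$ and, for each $\nu$ in the interval, constructs $\lambda \in \mathbb{Z}_p$ as an explicit $p$-adic series so that Lemma~\ref{lem:general-hdim-formulae} yields $\hdim(H_\lambda) = \nu$ by direct estimate. The endpoints $\nicefrac{1}{p+1}$ and $\nicefrac{p-1}{p+1}$ fall out of the particular choice $a_i = p^i$, $b_i = p^{i+1}$ and the resulting inequalities; they are not intrinsic to the $(p+1)$-regularity of the tree in the way you suggest.

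Your tree formulation and the identity $\hdim(H_\alpha) = \varliminf k_i(\alpha)/i$ are correct, but the construction of the walk is the whole problem and you have not carried it out. The proposed ``diagonal/closure argument'' does not work as stated: each $k_i$ is locally constant in $\alpha$, but $\alpha \mapsto \varliminf_i k_i(\alpha)/i$ is \emph{not} continuous, so hitting a countable dense set of values and invoking compactness of $\mathbb{P}^1(\mathbb{Z}_p)$ does not deliver the full interval. Moreover, your block scheme tunes $k_{n_{k+1}}(\alpha_k)/n_{k+1}$ for one end $\alpha_k$ per block, with no mechanism preventing later blocks from destroying the $\varliminf$ for the earlier $\alpha_k$'s. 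To make a tree-walk proof work you would need, for every target $c$, an end $\alpha(c)$ for which you simultaneously control a subsequence realising $c$ \emph{and} a uniform lower bound $k_i(\alpha(c))/i \ge c - o(1)$ for all $i$; this demands a far more delicate recursive design than the sketch you give, and is precisely what the paper sidesteps with its explicit construction.
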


This unexpectedly erratic behaviour motivates us to focus on Hausdorff
dimensions with respect to one of the four natural filtrations
$\mathcal{P}, \mathcal{L}, \mathcal{F}, \mathcal{D}$.

\begin{problems} \label{Problems} Let $G$ be a finitely generated
  pro-$p$ group, and let $\mathcal{P}, \mathcal{L}, \mathcal{F}, \mathcal{D}$
  denote the $p$-power series, the lower $p$-series, the Frattini
  series and the dimension subgroup series of~$G$.
  \begin{enumerate}
  \item[(1)] Suppose that
    $\lvert \hspec^\mathcal{S}(G) \rvert < \infty$ for at least one
    $\mathcal{S} \in \{ \mathcal{P}, \mathcal{L}, \mathcal{F},
    \mathcal{D} \}$.
    Does it follow that $\lvert \hspec^\mathcal{S}(G) \rvert < \infty$
    for all
    $\mathcal{S} \in \{ \mathcal{P}, \mathcal{L}, \mathcal{F},
    \mathcal{D} \}$?
  \item[(2)] Suppose that
    $\mathcal{S} \in \{ \mathcal{L}, \mathcal{F},
    \mathcal{D} \}$.
    If $G$ is $p$-adic analytic, does it follow that
    $\hspec^\mathcal{S}(G)$ is finite?  What is $\hspec^\mathcal{S}(G)$?
  \item[(3)] Suppose that $\mathcal{S} \in \{ \mathcal{P},
    \mathcal{L}, \mathcal{F}, \mathcal{D} \}$ and 
    $\lvert \hspec^\mathcal{S}(G) \rvert < \infty$.  Does it follow
    that $G$ is $p$-adic analytic?
  \end{enumerate}
\end{problems}

Regarding Problems~\ref{Problems}~(1) and (2) we obtain a positive
partial solution.

\begin{proposition} \label{pro:hdim-all-equal}
  Let $G$ be a $p$-adic analytic pro-$p$ group.  Then the Hausdorff
  dimension functions with respect to the $p$-power
  series~$\mathcal{P}$, the Frattini series~$\mathcal{F}$ and the
  dimension subgroup series~$\mathcal{D}$ coincide on closed
  subgroups~$H \le G$, i.e.,
  \[
  \hdim_G^\mathcal{P}(H) = \hdim_G^\mathcal{F}(H) =
  \hdim_G^\mathcal{D}(H).
  \]
  Consequently, $\hspec^\mathcal{P}(G) = \hspec^\mathcal{F}(G) =
  \hspec^\mathcal{D}(G)$.
\end{proposition}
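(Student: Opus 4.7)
The plan is to reduce to the case of a uniform pro-$p$ group and identify the three filtrations there explicitly, after which the statement follows from Theorem~\ref{BarneaShalevThm} applied to that subgroup.

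First I would choose a uniform open normal subgroup $V \trianglelefteq G$, which exists because $G$ is $p$-adic analytic; note that $\dim(V) = \dim(G) = d$. For each $\mathcal{S} \in \{\mathcal{P}, \mathcal{F}, \mathcal{D}\}$ the terms of $\mathcal{S}$ eventually lie inside $V$, and modulo a bounded index shift the tail of $\mathcal{S}$ on $G$ agrees with the corresponding filtration on $V$; the finite-index factor $[G:V]$ contributes only an $O(1)$ perturbation to both the numerator and the denominator of~\eqref{equ:hdim-formula}. This reduces the problem to computing $\hdim_V^{\mathcal{S}}(H \cap V)$ for a closed subgroup $H \le G$.

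For the Frattini series I would then prove by induction that $\Phi_i(V) = V^{p^i}$ on any uniform $V$. The base case $\Phi_1(V) = V^p$ uses the powerful inclusion $[V,V] \le V^p$, and the inductive step uses that $V^{p^{i-1}}$ is itself uniform, so $\Phi(V^{p^{i-1}}) = (V^{p^{i-1}})^p = V^{p^i}$. Hence $\mathcal{F}|_V$ and $\mathcal{P}|_V$ coincide literally as sequences of subgroups, giving $\hdim_V^{\mathcal{F}}(H \cap V) = \hdim_V^{\mathcal{P}}(H \cap V)$.

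For the dimension subgroup series I would combine the commutator-power estimates $\gamma_j(V) \le V^{p^{j-1}}$, valid in uniform groups, with the defining Jennings-type formula $D_n(V) = \prod_{jp^k \ge n} \gamma_j(V)^{p^k}$, to establish $D_n(V) = V^{p^{\lceil \log_p n \rceil}}$ for all $n \ge 1$. In particular $D_n(V)$ is constant on each block $p^{k-1} < n \le p^k$, equal to $V^{p^k}$. Because both $\log |HD_n(V) : D_n(V)|$ and $\log |V : D_n(V)|$ are then constant on such blocks, the $\varliminf$ defining $\hdim_V^{\mathcal{D}}(H \cap V)$ is realised along the subsequence $n = p^k$ and reduces to $\hdim_V^{\mathcal{P}}(H \cap V)$.

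Combining the three steps with Theorem~\ref{BarneaShalevThm} applied to $V$ yields the desired chain of equalities on Hausdorff dimensions, and the equality of spectra is immediate. I expect the main obstacle to be the Jennings-type identification $D_n(V) = V^{p^{\lceil \log_p n \rceil}}$: while the ingredients are classical, one has to verify carefully that the commutator-power inclusions $[V^{p^a}, V^{p^b}] \le V^{p^{a+b}}$ in uniform groups suffice to control every factor in the defining product. The reduction step also needs care, since neither $\mathcal{F}$ nor $\mathcal{D}$ is visibly self-similar under restriction to a finite-index subgroup, and one must relate $\Phi_i(G) \cap V$ and $D_i(G) \cap V$ to $\Phi_{?}(V)$ and $D_{?}(V)$ for $i$ large.
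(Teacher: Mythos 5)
Your identification of the three series on a uniform group is correct ($\Phi_i(V)=V^{p^i}$ and $D_n(V)=V^{p^{\lceil \log_p n\rceil}}$ both hold, by exactly the inclusions you cite), and the overall strategy --- show that $\mathcal{F}$ and $\mathcal{D}$ differ from $\mathcal{P}$ only by a bounded perturbation and conclude via the formula \eqref{equ:hdim-formula} --- is the same as the paper's. But there is a genuine gap at the step you yourself flag: the Hausdorff dimensions in the statement are computed with respect to the series $\Phi_i(G)$ and $D_i(G)$ \emph{of $G$}, whereas your explicit computations concern the series of the chosen uniform subgroup $V$. The asserted ``bounded index shift'' between $\Phi_i(G)$ and $\Phi_{i-c}(V)$ (and likewise for $\mathcal{D}$) is not a routine finite-index argument: the Frattini series is not functorial under passage to an open subgroup, and a priori $\Phi_i(G)\cap V$ has nothing to do with $\Phi_{?}(V)$. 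Proving that bridge requires precisely the structural input that does the real work, so as written the reduction is circular in spirit: the easy part (the uniform case) is proved, and the hard part (that the series of $G$ eventually \emph{is} such a uniform picture) is assumed.

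The way to close the gap --- and the route the paper takes --- is to apply the structural results to the terms of $\mathcal{F}$ and $\mathcal{D}$ themselves rather than to an auxiliary $V$. By \cite[Prop.~3.9, Th.~4.5]{DDMS99} there is a $j$ such that $G^{p^i}$ and $\Phi_i(G)$ are uniformly powerful for all $i\ge j$; then both series descend with constant layer size $\dim(G)$, so $\log_p\lvert\Phi_i(G):G^{p^i}\rvert$ is \emph{constant} for $i\ge j$ and Lemma~\ref{lem:DPhil} gives $\hdim_G^{\mathcal{P}}=\hdim_G^{\mathcal{F}}$ directly, with no appeal to Theorem~\ref{BarneaShalevThm} (which is not needed for the equality of dimension functions, only for their values). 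For $\mathcal{D}$ one similarly gets $D_{pi}(G)=D_i(G)^p$ for $i\ge j$ from \cite[Th.~11.4, 11.5, Lem.~11.22]{DDMS99}, compares the thinned-out series $D_{p^ij}(G)$ with $\mathcal{P}$ via Lemma~\ref{lem:DPhil}, and then recovers the full series $\mathcal{D}$ from its thinned-out version using the block-constancy bound $\log_p\lvert D_{p^ij}(G):D_k(G)\rvert\le\dim(G)$ together with Corollary~\ref{cor:2series}; your remark that the lower limit is ``realised along the subsequence $n=p^k$'' needs exactly such a two-sided comparison, since thinning out a filtration can by itself only increase the lower limit. With these substitutions your argument becomes essentially the paper's proof.
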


The situation for the lower $p$-series, even for $p$-adic analytic
pro-$p$ groups, is less clear.  In Example~\ref{exa:lower-p-series} we
illustrate that there are $p$-adic analytic pro-$p$ groups $G$ for
which $\hspec^\mathcal{P}(G) \not = \hspec^\mathcal{L}(G)$.  In fact,
we provide a concrete family of infinite $p$-adic analytic pro-$p$
groups $G$ such that $\lvert \hspec^\mathcal{L}(G) \rvert / \dim(G)$
is unbounded as $\dim(G) \to \infty$ within this family.  This raises
yet more interesting questions.

\begin{problems} 
  Does there exist, for any given $p$-adic analytic pro-$p$ group $G$,
  a uniform bound $b(G)$ for $\lvert \hspec^{\mathcal{S}}(G) \rvert$,
  as $\mathcal{S}$ runs through all filtration series of $G$ with
  $\lvert \hspec^{\mathcal{S}}(G) \rvert < \infty$?  

  If yes, does there exist, for any $n \in \mathbb{N}$, a uniform
  bound $b(n)$ for $b(G)$, as $G$ runs through all $p$-adic analytic
  pro-$p$ groups of dimension $\dim(G) \leq n$?
\end{problems}

Regarding Problem~\ref{Problems}~(3) we obtain the following positive
partial solution, in a strong sense.  This gives a new
characterisation of $p$-adic analytic pro-$p$ groups amongst soluble
pro\nobreakdash-$p$ groups in terms of Hausdorff dimension.

\begin{theorem} \label{thm:soluble-interval} Let $G$ be a finitely
  generated soluble pro-$p$ group, and let $\mathcal{S}$ be any one of
  the $p$-power series~$\mathcal{P}$, the Frattini
  series~$\mathcal{F}$ or the dimension subgroup series~$\mathcal{D}$.
  If $G$ is not $p$-adic analytic then the Hausdorff spectrum
  $\hspec^\mathcal{S}(G)$ with respect to $\mathcal{S}$
  contains an infinite real interval.
\end{theorem}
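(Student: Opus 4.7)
The strategy is to reduce, via the derived series and the structure theory of Iwasawa modules, to a concrete metabelian section of $G$ in which a continuum of closed subgroups with distinct Hausdorff dimensions can be built directly. The starting point is the equivalence (due to Lazard and Lubotzky--Mann) between $p$-adic analyticity and having finite rank for finitely generated pro-$p$ groups; hence the hypothesis translates to $G$ having infinite rank.

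Consider the derived series $G = G^{(0)} \supseteq G^{(1)} \supseteq \cdots \supseteq G^{(d)} = 1$. If every abelian section $G^{(j)}/G^{(j+1)}$ had finite rank, then $G$ itself would have finite rank, contradicting the hypothesis; so let $i$ be minimal with $A := G^{(i)}/G^{(i+1)}$ of infinite $\mathbb{Z}_p$-rank. After passing to an open subgroup I may assume $T := G/G^{(i)}$ is a uniform pro-$p$ group acting on $A$; because $G$ is finitely generated, $A$ is a finitely generated module over the Iwasawa algebra $\Lambda = \mathbb{Z}_p[[T]]$. By the standard structure theorem for finitely generated $\Lambda$-modules, $A$ contains (up to pseudo-isomorphism) a copy of $\Lambda/\mathfrak{p}$ for a prime $\mathfrak{p}$ with $\Lambda/\mathfrak{p}$ of infinite $\mathbb{Z}_p$-rank. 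I would concentrate on the canonical case $T \cong \mathbb{Z}_p = \langle \tau \rangle$ with $A \cong \mathbb{F}_p[[T]]$ and $\tau$ acting as multiplication by $1+T$ --- essentially the metabelian group $\mathbb{F}_p[[T]] \rtimes \mathbb{Z}_p$, the simplest finitely generated soluble pro-$p$ group of infinite rank. Since open subgroups have Hausdorff dimension $1$ and compatible quotients preserve intervals in the spectrum, it suffices to exhibit an interval inside $\hspec^{\mathcal{S}}(\bar G)$ for $\bar G := A \rtimes \langle \tau \rangle$.

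In this concrete model I would compute the terms of $\mathcal{S}$ explicitly: for $\mathcal{S} = \mathcal{P}$ one uses $(1+T)^{p^i} \equiv 1 + T^{p^i} \pmod p$ to pin down $\bar G^{p^i}$ in terms of powers of $T$; for $\mathcal{F}$ and $\mathcal{D}$ one expands via commutators, where the structure is still governed by powers of $T$ and $p$. For each real $\alpha$ in a suitable subinterval of $(0,1)$ I would construct a closed $\mathbb{Z}_p$-submodule $H_\alpha \leq A$ by selecting $\tau$-stable $\mathbb{F}_p$-subspaces supported on a set of powers of $T$ of asymptotic density tuned to $\alpha$, in the spirit of the construction behind Theorem~\ref{thm:maps-onto-Zp2}. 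Applying the density formula~\eqref{equ:hdim-formula} should then pin down $\hdim_{\bar G}^{\mathcal{S}}(H_\alpha) = \alpha$, yielding the desired interval inside $\hspec^{\mathcal{S}}(G)$.

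The main obstacle is carrying out the filtration-by-filtration computation on the semidirect product $A \rtimes \langle \tau \rangle$ uniformly enough that a single parametrised family $\{H_\alpha\}$ realises the claimed dimensions for all three choices of $\mathcal{S}$: the three filtrations behave differently on $\bar G$ (e.g.\ $\mathcal{P}$ is sensitive to $p$-powers of $\tau$, while $\mathcal{F}$ and $\mathcal{D}$ also involve iterated commutators), so the index calculations have to be redone in each case. A secondary, delicate technicality is justifying the reductions: one must verify that passage to an open subgroup or to the quotient $G/G^{(i+1)}$ neither introduces nor destroys intervals in $\hspec^{\mathcal{S}}(G)$, since the restriction of $\mathcal{S}$ to a subgroup or the induced filtration on a quotient need not coincide with the intrinsic filtration of the same name --- this comparison must be carried out through the asymptotic index counts in~\eqref{equ:hdim-formula}.
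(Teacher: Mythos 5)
Your outline has a genuine gap at the very first reduction, and it is not the ``secondary technicality'' you label it as. You choose $i$ \emph{minimal} with $A = G^{(i)}/G^{(i+1)}$ of infinite rank and propose to produce the interval inside $\hspec^{\mathcal{S}}(G/G^{(i+1)})$ and lift it. Writing $N = G^{(i+1)}$ and $v_j = \log_p \lvert NG_j : G_j\rvert$, $b_j = \log_p\lvert G : NG_j\rvert$, a subgroup $H \supseteq N$ with $\log_p\lvert HG_j : NG_j\rvert / b_j \to \alpha$ has
$\hdim_G^{\mathcal{S}}(H) = \varliminf (\alpha b_j + v_j)/(b_j + v_j)$, and if $v_j/b_j \to \infty$ this equals $1$ for \emph{every} such $H$: the whole interval collapses to the point $1$ in $G$. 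Nothing in your setup rules this out --- $G^{(i+1)}$ may itself have infinite rank and grow much faster than $G/G^{(i+1)}$ along $\mathcal{S}$, and taking $i$ minimal makes this the typical scenario rather than an avoidable one. (Note also that Proposition~\ref{pro:shifting-hspec} cannot rescue you here, since it produces a tailor-made filtration on $G$, not the fixed $\mathcal{P}$, $\mathcal{F}$ or $\mathcal{D}$.) A second, independent gap is the passage to the ``canonical case'' $\mathbb{F}_p[[T]]\rtimes\mathbb{Z}_p$: the Iwasawa structure theorem only gives a pseudo-isomorphism, $T$ need not be one-dimensional (and for nonabelian $T$ the structure theory is much weaker), $A$ may instead contain a free piece $\mathbb{Z}_p[[T]]$, and the extension of $T$ by $A$ need not split; each of these would force a separate argument, and on top of that you would still owe three distinct index computations for $\mathcal{P}$, $\mathcal{F}$, $\mathcal{D}$.

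The paper's proof sidesteps all of this by working from the other end of the derived series and avoiding explicit models entirely: it takes $k$ \emph{maximal} with $\hdim_G^{\mathcal{S}}(G^{(k)}) = 1$ (so the relevant limit is automatically proper and no collapsing can occur), shows via Proposition~\ref{pro:p-adic-analytic-qu-0} --- which uses only the growth lower bounds $\log_p\lvert G : G_j\rvert \geq p^j$, $\geq (j - j_0)r$, $\geq j-1$ for $\mathcal{P}$, $\mathcal{F}$, $\mathcal{D}$ respectively, together with the fact that $KG^{(k+1)}/G^{(k+1)}$ is abelian of finite rank --- that every finitely generated $K \leq G^{(k)}$ satisfies $\hdim_G^{\mathcal{S}}(K) \leq \eta := \hdim_G^{\mathcal{S}}(G^{(k+1)}) < 1$, and then realises every value in $(\eta,1]$ by the abstract ascending-chain construction of Proposition~\ref{pro:fin-gen-less-eta} and Theorem~\ref{thm:eta-xi-interval}, with no module-theoretic input. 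If you want to salvage your strategy, you would at minimum have to replace ``$i$ minimal'' by a choice guaranteeing $\hdim_G^{\mathcal{S}}(G^{(i+1)}) = 0$ as a proper limit, and find a way around the pseudo-isomorphism and splitting issues; at that point the paper's filtration-free argument is both shorter and more general.
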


\begin{corollary} \label{cor:characterisation-soluble} Let $G$ be a
  finitely generated soluble pro-$p$ group, and let $\mathcal{S}$ be
  one of $\mathcal{P}$, $\mathcal{F}$ or $\mathcal{D}$.  Then $G$ is
  $p$-adic analytic if and only if $\hspec^\mathcal{S}(G)$ is finite.
\end{corollary}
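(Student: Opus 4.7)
The plan is to observe that Corollary~\ref{cor:characterisation-soluble} is essentially a repackaging of the three preceding results, so the proof should be very short and largely a bookkeeping exercise: no new machinery is required, and the solubility hypothesis enters only through the reverse implication via Theorem~\ref{thm:soluble-interval}.

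For the forward implication, I would assume that $G$ is $p$-adic analytic and argue that $\hspec^\mathcal{S}(G)$ is finite for each choice of $\mathcal{S} \in \{\mathcal{P},\mathcal{F},\mathcal{D}\}$. First, Theorem~\ref{BarneaShalevThm} gives that
\[
\hspec^\mathcal{P}(G) \subseteq \bigl\{0,\tfrac{1}{d},\tfrac{2}{d},\ldots,\tfrac{d-1}{d},1\bigr\},
\]
where $d = \dim(G)$ (the case $d = 0$, i.e.\ $G$ finite, is immediate from the backup definition), so in particular $\hspec^\mathcal{P}(G)$ is finite. Proposition~\ref{pro:hdim-all-equal} then yields $\hspec^\mathcal{P}(G) = \hspec^\mathcal{F}(G) = \hspec^\mathcal{D}(G)$, so all three spectra are finite simultaneously. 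Note that solubility is not needed here.

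For the reverse implication, I would argue by contrapositive: if $G$ is not $p$-adic analytic, then Theorem~\ref{thm:soluble-interval} (applied with the given $\mathcal{S}$, using the solubility of $G$) asserts that $\hspec^\mathcal{S}(G)$ contains an infinite real interval. In particular $\hspec^\mathcal{S}(G)$ is infinite, which contradicts the hypothesis. This establishes the equivalence.

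There is no genuine obstacle: the entire content of the corollary lies in Theorem~\ref{thm:soluble-interval}, which is assumed to have been proved earlier, and in Proposition~\ref{pro:hdim-all-equal}, which handles the mild issue that Theorem~\ref{BarneaShalevThm} is stated only for the $p$-power series~$\mathcal{P}$. The only thing worth double-checking is that Theorem~\ref{thm:soluble-interval} is genuinely stated for each of $\mathcal{P}, \mathcal{F}, \mathcal{D}$ (it is), so that the same three filtrations appear on both sides of the equivalence.
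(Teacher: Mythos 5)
Your proposal is correct and matches the paper's intended argument: the paper derives the corollary as an immediate consequence of Theorem~\ref{thm:soluble-interval} (for the ``only if'' direction of non-analyticity) together with Theorem~\ref{BarneaShalevThm} and Proposition~\ref{pro:hdim-all-equal} (for the analytic case). Your bookkeeping, including the observation that solubility is only needed for the reverse implication, is exactly what the paper leaves implicit.
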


It remains an open question whether a statement similar to
Theorem~\ref{thm:soluble-interval} holds true for the lower
$p$-series.

Finally, we verify that Barnea and Shalev's characterisation of
$p$-adic analytic pro-$p$ groups in terms of Hausdorff
dimension~\cite[Th.~1.3]{BaSh97} remains partly valid with respect to
the other filtration series considered here.
 
\begin{theorem} \label{thm:Zelmanov-characterisation} Let $G$ be a
  finitely generated pro-$p$ group, and let $\mathcal{S}$ be any one
  of the $p$-power series~$\mathcal{P}$, the Frattini
  series~$\mathcal{F}$ or the dimension subgroup series~$\mathcal{D}$.
  Then the following are equivalent.
  \begin{enumerate}
  \item The group $G$ is $p$-adic analytic.
  \item There exists a constant $c \in (0,1]$ such that every infinite
    closed subgroup $H \leq G$ satisfies
    $\hdim_G^\mathcal{S}(H) \geq c$.
  \item Every infinite closed subgroup $H \leq G$ satisfies
    $\hdim_G^\mathcal{S}(H) > 0$.
  \item The group $G$ is finite, or there exists a closed subgroup $H \leq G$
    such that $H \cong \mathbb{Z}_p$ and $\hdim_G^\mathcal{S}(H) > 0$.
  \end{enumerate}
\end{theorem}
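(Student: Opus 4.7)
The plan is to establish the cycle (i) $\Rightarrow$ (ii) $\Rightarrow$ (iii) $\Rightarrow$ (i) and separately (i) $\Rightarrow$ (iv) $\Rightarrow$ (i). Three of these steps are easy. Combining Theorem~\ref{BarneaShalevThm} with Proposition~\ref{pro:hdim-all-equal}, if $G$ is $p$-adic analytic of dimension $d = \dim(G)$ then $\hdim_G^\mathcal{S}(H) = \dim(H)/d$ for every closed $H \leq G$ and every $\mathcal{S} \in \{\mathcal{P}, \mathcal{F}, \mathcal{D}\}$; so the constant $c = 1/d$ realises (ii), and when $G$ is infinite any closed one-parameter analytic subgroup $H \cong \mathbb{Z}_p$ witnesses (iv). The implication (ii) $\Rightarrow$ (iii) is immediate.

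The implication (iii) $\Rightarrow$ (i) is the main step, which I argue by contraposition. Assume $G$ has infinite rank (and thus is not $p$-adic analytic). For $\mathcal{S} = \mathcal{P}$ the original Barnea--Shalev characterisation~\cite[Thm.~1.3]{BaSh97} (resting on Zelmanov~\cite{Ze92}) produces an infinite closed $H \leq G$ with $\hdim_G^\mathcal{P}(H) = 0$. For $\mathcal{S} \in \{\mathcal{F}, \mathcal{D}\}$ the plan is to transpose the same construction to the filtration at hand: since the generator numbers of open subgroups of $G$ are unbounded, the consecutive layers $G_i/G_{i+1}$ of $\mathcal{S}$ accumulate arbitrarily many independent $\mathbb{F}_p$-directions, and one selects elements $x_j \in G_{n_j} \setminus G_{n_j+1}$ along a sequence $(n_j)_j$ of indices spaced sparsely enough, relative to the rapid growth of $\log|G:G_i|$, so that the closure $H = \overline{\langle x_j : j \in \mathbb{N}\rangle}$ is topologically infinite while $\log|HG_i : G_i|/\log|G : G_i|$ tends to zero along a subsequence.

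For (iv) $\Rightarrow$ (i), assume $G$ is infinite and fix $H \cong \mathbb{Z}_p$ closed in $G$ with $\hdim_G^\mathcal{S}(H) = c > 0$. Since $H$ is procyclic, $|HG_i : G_i| = |H : H \cap G_i|$. Exploiting the inclusions $\Phi_i(G) \supseteq G^{p^i}$ for $\mathcal{F}$ and, via Jennings' formula, $D_i(G) \supseteq G^{p^{\lceil \log_p i\rceil}}$ for $\mathcal{D}$, together with $D_i(H) = H^{p^{\lceil \log_p i\rceil}}$ when $H \cong \mathbb{Z}_p$, we bound $|H : H \cap G_i|$ by $p^i$ (for $\mathcal{S} \in \{\mathcal{P}, \mathcal{F}\}$) or by $p^{\lceil \log_p i\rceil}$ (for $\mathcal{S} = \mathcal{D}$). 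The inequality $\log|G:G_i| \leq c^{-1}\log|HG_i:G_i|$ then forces $|G:G_i|$ to grow at most polynomially in $i$, which by the classical characterisation of $p$-adic analytic pro-$p$ groups as those of finite rank (\cite[Interl.~A]{DDMS99}) implies $G$ is $p$-adic analytic.

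The main obstacle is the adaptation of the Barnea--Shalev construction to the filtrations $\mathcal{F}$ and $\mathcal{D}$, which descend much more rapidly than $\mathcal{P}$. Calibrating the spacing of the selection indices $n_j$ to ensure that the resulting closed subgroup remains topologically infinite while its density still vanishes is the central quantitative task.
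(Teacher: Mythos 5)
Your implications (i)$\Rightarrow$(ii), (ii)$\Rightarrow$(iii), (i)$\Rightarrow$(iv) and (iv)$\Rightarrow$(i) are essentially sound, and your (iv)$\Rightarrow$(i) computation is exactly the paper's mechanism: for $H \cong \mathbb{Z}_p$ one has $\lvert HG_i : G_i\rvert \le p^i$ (for $\mathcal{P}$, $\mathcal{F}$) or $p^{O(\log i)}$ (for $\mathcal{D}$), while in a non-analytic group $\log_p\lvert G : G_i\rvert$ grows like $p^i$, super-linearly, or at least linearly, respectively; this is the content of the paper's Proposition~\ref{pro:p-adic-analytic-qu-0} applied with $N=1$. (One small caution: for $\mathcal{F}$ your phrase ``at most polynomially'' is too weak --- super-linear growth of $\log_p\lvert G:\Phi_i(G)\rvert$ is all that infinite rank guarantees, so you need the \emph{linear} bound $\log_p\lvert G:\Phi_i(G)\rvert \le c^{-1} i\log p$ that your inequality actually delivers, not merely a polynomial one.)

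The genuine gap is your treatment of (iii)$\Rightarrow$(i) for $\mathcal{F}$ and $\mathcal{D}$. The ``transpose the Barnea--Shalev construction'' step --- choosing elements $x_j \in G_{n_j}\smallsetminus G_{n_j+1}$ along a sparse sequence and taking $H = \overline{\langle x_j : j\in\mathbb{N}\rangle}$ --- is not carried out, and as stated it is problematic: there is no control on $\lvert HG_i : G_i\rvert$ for a subgroup generated by infinitely many elements of a non-abelian group (it can be much larger than the product of the cyclic contributions), nor is it clear that $H$ is infinite unless the $x_j$ are shown to be independent in a suitable sense. You flag this yourself as the ``central quantitative task,'' but it is in fact an unnecessary detour. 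The paper closes the cycle as (iii)$\Rightarrow$(iv)$\Rightarrow$(i): Zelmanov's theorem gives that every infinite finitely generated pro-$p$ group contains a closed subgroup $H \cong \mathbb{Z}_p$, so (iii) trivially implies (iv); and the contrapositive of (iv)$\Rightarrow$(i) --- which you have already proved --- shows that in a non-analytic group \emph{every} such $H \cong \mathbb{Z}_p$ has $\hdim_G^{\mathcal{S}}(H) = 0$, which simultaneously refutes (iii). In other words, the witness you need for the contrapositive of (iii)$\Rightarrow$(i) is precisely the procyclic subgroup whose dimension your own (iv)$\Rightarrow$(i) estimates force to vanish; no new construction for $\mathcal{F}$ or $\mathcal{D}$ is required.
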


Again, the situation for the lower $p$-series is less clear.  In
Example~\ref{exa:two-SL2s} we illustrate, for
$\mathcal{S} = \mathcal{L}$, that (iv) does not generally imply (i),
(ii) or~(iii).  On the other hand, we establish in
Proposition~\ref{pro:lower-bound-for-lower-p}, for
$\mathcal{S} = \mathcal{L}$, that (i) still implies (ii), (iii)
and~(iv).

\medskip

\noindent \textit{Notation and Organisation.}  Throughout,
$\varliminf a_i = \varliminf_{i \to \infty} a_i$ denotes the lower
limit (limes inferior) of a sequence $(a_i)_{i \in \mathbb{N}}$ in
$\mathbb{R} \cup \{ \pm \infty \}$.  At times, some terms $a_i$ may
evaluate to $y/0$ for some $y \in \mathbb{R}_0$.  For definiteness, we
agree that $0/0 = 1$ and $y/0 = \infty$ for $y > 0$.  Tacitly,
subgroups of profinite groups are generally understood to be closed
subgroups.  As a default we set $P_0(G) = D_0(G) = G$ for every
finitely generated pro-$p$ group~$G$.

Theorem~\ref{thm:maps-onto-Zp2} is proved in Section~\ref{sec:3}.
Proposition~\ref{pro:hdim-all-equal} and
Theorem~\ref{thm:Zelmanov-characterisation} are established in
Section~\ref{sec:4}.  Theorem~\ref{thm:soluble-interval} is proved in
Section~\ref{sec:5}.
  

\section{Preliminaries}

In this section we collect some auxiliary results for later use.

\begin{proposition} \label{pro:shifting-hspec} Let
  $\varphi \colon G \rightarrow \widetilde{G}$ be an epimorphism
  between infinite countably based profinite groups, and let
  $\widetilde{\mathcal{S}} \colon \widetilde{G}_0 \supseteq
  \widetilde{G}_1 \supseteq \ldots$
  be a filtration series of~$\widetilde{G}$.  Then there exists a
  filtration series
  $\mathcal{S} \colon G_0 \supseteq G_1 \supseteq \ldots$ of~$G$ such
  that
  \[
  G_i \varphi = \widetilde{G}_i \quad \text{for all $i \in
  \mathbb{N}_0$,} \qquad  \text{and} \qquad 
  \hspec^\mathcal{S}(G) =
  \hspec^{\widetilde{\mathcal{S}}}(\widetilde{G}).
  \]
\end{proposition}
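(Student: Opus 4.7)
The plan is to define $\mathcal{S}$ by intersecting each preimage $H_i := \varphi^{-1}(\widetilde{G}_i)$ with a term from an auxiliary filtration of $G$ that shrinks very slowly in the direction of $N := \ker\varphi$, so that $N$ contributes negligibly to the Hausdorff dimension.  Since $G$ is countably based, fix a filtration $(W_k)_{k \in \mathbb{N}_0}$ of $G$ by open normal subgroups with $W_0 = G$, $W_1 \varphi \subsetneq \widetilde{G}$, and $\bigcap_k W_k = 1$.  A standard orbit-intersection argument applied to a countable base of open normal subgroups of $N$ yields a filtration $(V_k)_{k \in \mathbb{N}_0}$ of $N$ by $G$-invariant open-in-$N$ subgroups with $V_0 = N$ and $\bigcap_k V_k = 1$; by thinning we may insist that $|N:V_k|$ grows as slowly as desired.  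Put $T_k := V_k W_k$: these are open normal in $G$, decrease with $k$, and satisfy $\bigcap_k T_k = 1$ by a short compactness argument (extract a convergent subsequence of the $V_k$-components).  For each $i$ set
\[
k_i := \max\{k \in \mathbb{N}_0 : T_k \varphi \supseteq \widetilde{G}_i\}.
\]
Since $T_k \varphi = W_k \varphi$ (the factor $V_k$ lies in $\ker \varphi$) and $(W_k \varphi)$ is itself a filtration of $\widetilde{G}$, the integer $k_i$ is finite, non-decreasing in $i$, and tends to infinity.  Finally define $G_i := T_{k_i} \cap H_i$.

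The basic verifications are routine: $G_i$ is open normal in $G$; $G_i \supseteq G_{i+1}$ because $k_i \leq k_{i+1}$ and $H_i \supseteq H_{i+1}$; $\bigcap_i G_i \subseteq \bigcap_i T_{k_i} = 1$; and $G_0 = T_0 \cap G = V_0 W_0 = N \cdot G = G$ (using $k_0 = 0$, guaranteed by $W_1 \varphi \subsetneq \widetilde{G}$).  The image equality $G_i \varphi = \widetilde{G}_i$ reduces to the observation that any lift $t \in T_{k_i}$ of $\widetilde{g} \in T_{k_i} \varphi \cap \widetilde{G}_i$ automatically lies in $\varphi^{-1}(\widetilde{G}_i) = H_i$, so $(T_{k_i} \cap H_i)\varphi = T_{k_i} \varphi \cap \widetilde{G}_i = \widetilde{G}_i$ by the very definition of $k_i$.

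For the Hausdorff comparison, $N \leq H_i$ and the modular law yield $G_i \cap N = T_{k_i} \cap N = V_{k_i}(W_{k_i} \cap N)$, whence $|N : G_i \cap N| \leq |N : V_{k_i}|$.  For any closed $H \leq G$ with image $\widetilde{H} := H \varphi$, restricting $\varphi$ to $HG_i$ and $G_i$ produces a short exact sequence
\[
1 \to (HG_i \cap N)/(G_i \cap N) \to HG_i/G_i \to \widetilde{H}\widetilde{G}_i/\widetilde{G}_i \to 1,
\]
giving $\log \lvert HG_i : G_i \rvert = \log \lvert \widetilde{H}\widetilde{G}_i : \widetilde{G}_i \rvert + \log \lvert HG_i \cap N : G_i \cap N \rvert$, and similarly $\log \lvert G : G_i \rvert = \log \lvert \widetilde{G} : \widetilde{G}_i \rvert + \log \lvert N : G_i \cap N \rvert$.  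Both kernel contributions are bounded by $\log \lvert N : V_{k_i} \rvert = o(\log \lvert \widetilde{G} : \widetilde{G}_i \rvert)$ by the choice of $V_k$, so formula~\eqref{equ:hdim-formula} delivers $\hdim_G^\mathcal{S}(H) = \hdim_{\widetilde{G}}^{\widetilde{\mathcal{S}}}(\widetilde{H})$.  Since $\varphi$ is surjective, every closed subgroup of $\widetilde{G}$ occurs as $\varphi(\varphi^{-1}(\widetilde{H})) = \widetilde{H}$, so the two spectra coincide.

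The main obstacle is the final $o$-estimate: the sequence $(k_i)$ is determined by $W_k$ and the given $\widetilde{\mathcal{S}}$, and one must choose $V_k$ a posteriori so that $|N : V_{k_i}|$ is eventually dwarfed by $|\widetilde{G} : \widetilde{G}_i|$.  A diagonal thinning argument does the job: beginning from any $G$-invariant filtration $(V^{(0)}_k)$ of $N$, replace it by $V_k := V^{(0)}_{\rho(k)}$ for a sufficiently slow non-decreasing $\rho \colon \mathbb{N}_0 \to \mathbb{N}_0$ chosen so that $\log \lvert N : V^{(0)}_{\rho(k_i)} \rvert / \log \lvert \widetilde{G} : \widetilde{G}_i \rvert \to 0$.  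Such $\rho$ exists because $k_i$ and $\log \lvert \widetilde{G} : \widetilde{G}_i \rvert$ both tend to infinity; if $N$ happens to be finite, one can simply take $V_k = 1$ eventually.
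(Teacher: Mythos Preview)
Your argument is correct and reaches the same conclusion as the paper, but the construction is genuinely different.  The paper builds the filtration \emph{from below}: it first manufactures a series $G_i^{**}$ with $G_i^{**}\varphi = \widetilde{G}_i$ by enlarging an arbitrary filtration $G_i^*$ (with $G_i^*\varphi \subseteq \widetilde{G}_i$) using carefully chosen generators, and then inflates each term by a slowly shrinking piece $K_{h(i)}$ of the kernel $K=\ker\varphi$ to force $\hdim_G^\mathcal{S}(K)=0$ as a genuine limit; the computation of $\hdim_G^\mathcal{S}(H)$ is then carried out by splitting $\log\lvert HG_i:G_i\rvert$ along the chain $HG_i \supseteq HG_i\cap KG_i \supseteq G_i$.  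You instead work \emph{from above}: you start with the full preimages $H_i=\varphi^{-1}(\widetilde{G}_i)$, which already have the correct image but are far too large in the $N$-direction, and cut them down by intersecting with an auxiliary series $T_{k_i}=V_{k_i}W_{k_i}$; the crucial observation that $T_k\varphi=W_k\varphi$ lets you fix the index function $k_i$ before choosing $V_k$, avoiding any circularity.  Your route sidesteps the somewhat ad~hoc construction of $G_i^{**}$ and makes the exact-sequence bookkeeping $\lvert HG_i:G_i\rvert = \lvert \widetilde{H}\widetilde{G}_i:\widetilde{G}_i\rvert\cdot\lvert HG_i\cap N:G_i\cap N\rvert$ transparent (using $H_i=NG_i$ and the modular law); the paper's route, on the other hand, packages the kernel control into the single choice of $h$ and does not need the auxiliary $W_k$.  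One minor simplification: rather than invoking an orbit-intersection argument to obtain the $G$-invariant filtration $(V_k^{(0)})$ of $N$, you can simply take $V_k^{(0)}=W_k\cap N$.
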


\begin{proof}
  Clearly we can find a filtration series
  $G = G_0^* \supseteq G_1^* \supseteq \ldots$ of $G$ such that
  $G_i^* \varphi \subseteq \widetilde{G}_i$ for all
  $i \in \mathbb{N}_0$.  The function
  \[
  f \colon \mathbb{N}_0 \rightarrow \mathbb{N}_0, \quad f(i) = \max \{
  j \in \mathbb{N}_0 \mid \widetilde{G}_i \subseteq G_j^* \varphi \}
  \]
  is non-decreasing and $\lim_{i \to \infty} f(i) = \infty$.  Choose,
  for each $i \in \mathbb{N}_0$, a suitable subset
  $X_i \subseteq G_{f(i)}^*$ such that
  $(\langle X_i \rangle^G \, G_i^*) \varphi = \widetilde{G}_i$.  By
  putting
  $G_i^{**} = \langle X_i \cup X_{i+1} \cup \ldots \rangle^G \,  G_i^*$,
  $i \in \mathbb{N}_0$, we construct a filtration series
  $G = G_0^{**} \supseteq G_1^{**} \supseteq \ldots$ of $G$ such that
  $G_i^{**} \varphi = \widetilde{G}_i$ for all $i \in \mathbb{N}_0$.

  Set $K = \mathrm{ker}(\varphi) \trianglelefteq G$.  By putting
  $K_i = K \cap G_i^{**}$, $i \in \mathbb{N}_0$, we obtain a
  filtration series of~$K$ that consists of normal subgroups of~$G$.
  Choosing a non-decreasing function
  $h \colon \mathbb{N}_0 \rightarrow \mathbb{N}_0$ such that
  \[
  \lim_{i \to \infty} h(i) = \infty \qquad \text{and} \qquad
  \frac{\log \lvert K : K_{h(i)} \rvert}{\log \lvert \widetilde{G} :
    \widetilde{G}_i \rvert} \underset{i \to \infty}{\longrightarrow}
  0,
  \]
  we define the filtration series
  \[
  \mathcal{S} \colon G_i = K_{h(i)} G_i^{**}, \quad i \in \mathbb{N}_0,
  \]
  of~$G$.  The filtration series $\mathcal{S}$ satisfies
  \[
  \frac{\log \lvert K G_i : G_i \rvert}{\log \lvert G : K G_i \rvert}
  = \frac{\log \lvert K : K \cap G_i \rvert}{\log \lvert G : K G_i
    \rvert} \le \frac{\log \lvert K : K_{h(i)} \rvert}{\log \lvert
    \widetilde{G} : \widetilde{G}_i \rvert} \underset{i \to
    \infty}{\longrightarrow} 0.
  \]
  Since
  \[
  \frac{\log \lvert K G_i : G_i \rvert}{\log \lvert G : G_i \rvert}
  \le \frac{\log \lvert K G_i : G_i \rvert}{\log \lvert G : K G_i
    \rvert},
  \]
  this ensures, in particular, that
  $\hdim_G^\mathcal{S}(K) = 0$ is given by a proper limit, that is
  \[
  \hdim_G^\mathcal{S}(K) = \lim_{i \to \infty} \frac{\log \lvert K G_i
    : G_i \rvert}{\log \lvert G : G_i \rvert} = 0.
  \]
  Furthermore, for every $H \le G$ we obtain
  \begin{align*}
    \hdim_G^\mathcal{S}(H) %
    & = \varliminf \frac{\log \lvert H G_i : G_i \rvert}{\log \lvert
      G : G_i \rvert} \\
    & = \varliminf \frac{\log \lvert H G_i : (H G_i \cap K G_i)
      \rvert}{\log \lvert G : KG_i \rvert + \log \lvert KG_i : G_i
      \rvert}  + \frac{\log \lvert (H G_i 
      \cap K G_i) : G_i \rvert}{\log \lvert  G : G_i \rvert} \\
    & = \varliminf \, \frac{\log \lvert HK G_i : KG_i \rvert / \log \lvert G :
      KG_i \rvert}{1 + \underbrace{\log \lvert K G_i : G_i \rvert / \log \lvert G : K
      G_i \rvert}_{\to 0 \text{ as } i \to \infty}} +
      \underbrace{\frac{\log \lvert (H G_i \cap K G_i) : G_i \rvert}{\log \lvert G
      : G_i \rvert}}_{\to 0  \text{ as } i \to \infty} \\ 
    & = \hdim_{\widetilde{G}}^{\widetilde{\mathcal{S}}}(H \varphi).
  \end{align*}  
  We conclude that
  \[
  G_i \varphi = \widetilde{G}_i \quad \text{for all $i \in
  \mathbb{N}_0$,} \qquad  \text{and} \qquad 
  \hspec^\mathcal{S}(G) =
  \hspec^{\widetilde{\mathcal{S}}}(\widetilde{G}). \qedhere
  \]
\end{proof}

The following lemma can be verified by routine arguments; e.g.,
see~\cite[Lem.~7.1]{Kl99}.

\begin{lemma} \label{lem:DPhil}
 Let $G$ be a countably based profinite group, and let
 \[
 \mathcal{S} \colon G = G_0 \supseteq G_1 \supseteq \ldots, \qquad
 \mathcal{S}^* \colon G = G_0^* \supseteq G_1^* \supseteq \ldots
 \]
 be filtration series of~$G$.  Suppose that
 \[
 \lim_{i \to \infty} \frac{\log \lvert G_i G_i^* : G_i \rvert}{\log
   \lvert G : G_i G_i^* \rvert} = \lim_{i \to \infty} \frac{\log
   \lvert G_i G_i^* : G_i^* \rvert}{\log \lvert G : G_i G_i^* \rvert}
 = 0.
 \]
 Then every closed subgroup $H \le G$ satisfies
 $\hdim_G^\mathcal{S}(H) = \hdim_G^{\mathcal{S}^*}(H)$.
\end{lemma}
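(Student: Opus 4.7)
The plan is to introduce the auxiliary chain $T_i = G_i G_i^*$ of open normal subgroups and show that both Hausdorff dimensions $\hdim_G^{\mathcal{S}}(H)$ and $\hdim_G^{\mathcal{S}^*}(H)$ collapse to the single quantity $\alpha = \varliminf_{i\to\infty} \log \lvert HT_i : T_i\rvert / \log \lvert G : T_i\rvert$. Since the hypothesis is symmetric in $\mathcal{S}$ and $\mathcal{S}^*$, it suffices to verify this for $\mathcal{S}$. (If $G$ is finite then $\hdim_G^{\mathcal{S}}(H) = \log\lvert H\rvert/\log\lvert G\rvert$ by the backup definition, independent of the filtration, so we may tacitly assume $G$ is infinite; in that case $\log\lvert G:T_i\rvert \to \infty$ is forced by the hypothesis together with $\log\lvert G:G_i\rvert \to \infty$.)

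The central ingredient is the multiplicative identity
\[
\lvert HT_i : G_i\rvert \;=\; \lvert HT_i : HG_i\rvert \cdot \lvert HG_i : G_i\rvert \;=\; \lvert HT_i : T_i\rvert \cdot \lvert T_i : G_i\rvert,
\]
read off from the two chains $G_i \subseteq HG_i \subseteq HT_i$ and $G_i \subseteq T_i \subseteq HT_i$. Because $HT_i = (HG_i)G_i^*$, the second isomorphism theorem gives $\lvert HT_i : HG_i\rvert = \lvert G_i^* : G_i^* \cap HG_i\rvert$, which is squeezed between $1$ and $\lvert G_i^* : G_i^* \cap G_i\rvert = \lvert T_i : G_i\rvert$. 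Consequently
\[
\log \lvert HT_i : T_i\rvert \;\le\; \log \lvert HG_i : G_i\rvert \;\le\; \log \lvert HT_i : T_i\rvert + \log \lvert T_i : G_i\rvert,
\]
and an analogous estimate holds for $\mathcal{S}^*$ with $\log\lvert T_i:G_i^*\rvert$ in place of $\log\lvert T_i:G_i\rvert$.

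Next I would divide through by $\log \lvert G : G_i\rvert = \log \lvert G : T_i\rvert + \log \lvert T_i : G_i\rvert$, yielding the sandwich
\[
\frac{\log \lvert HT_i : T_i\rvert}{\log \lvert G : T_i\rvert + \log \lvert T_i : G_i\rvert} \;\le\; \frac{\log \lvert HG_i : G_i\rvert}{\log \lvert G : G_i\rvert} \;\le\; \frac{\log \lvert HT_i : T_i\rvert + \log \lvert T_i : G_i\rvert}{\log \lvert G : T_i\rvert + \log \lvert T_i : G_i\rvert}.
\]
Setting $\varepsilon_i = \log\lvert T_i:G_i\rvert/\log\lvert G:T_i\rvert \to 0$ and noting that $\log\lvert HT_i:T_i\rvert/\log\lvert G:T_i\rvert$ is bounded in $[0,1]$, both outer terms have the same $\varliminf$ as $\log\lvert HT_i:T_i\rvert/\log\lvert G:T_i\rvert$, namely $\alpha$. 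Hence $\hdim_G^{\mathcal{S}}(H) = \alpha$, and the symmetric argument using the $\mathcal{S}^*$-part of the hypothesis yields $\hdim_G^{\mathcal{S}^*}(H) = \alpha$. The only mildly delicate point is the final $\varliminf$ manipulation, which is handled by passing to convergent subsequences of $\log\lvert HT_i:T_i\rvert/\log\lvert G:T_i\rvert$ and using that a small perturbation in numerator and denominator does not affect the liminf of a bounded sequence; this is the sort of routine estimate the authors allude to.
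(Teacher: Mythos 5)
Your proof is correct. The paper itself gives no argument for this lemma, merely calling it a routine verification and pointing to \cite[Lem.~7.1]{Kl99}; your sandwich estimate via $T_i = G_iG_i^*$, using $|HT_i:T_i|\cdot|T_i:G_i| = |HT_i:HG_i|\cdot|HG_i:G_i|$ with $1 \le |HT_i:HG_i| \le |T_i:G_i|$ and the symmetry of the hypothesis, is precisely the kind of routine argument intended, and all the delicate points (that $\log|G:T_i|\to\infty$, and that an $o(1)$ perturbation of a bounded sequence preserves the lower limit) are handled correctly.
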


\begin{corollary} \label{cor:2series}
  Let $G$ be a countably based profinite group, and let
  \[
  \mathcal{X} \colon G = X_0 \supseteq X_1 \supseteq \ldots, \qquad
  \mathcal{Y} \colon G = Y_0 \supseteq Y_1 \supseteq \ldots
  \]
  be filtration series of~$G$.  Suppose that
  \[
  \mathbb{N}_0 \to \mathbb{N}_0, \quad i \mapsto i^* \qquad \text{and}
  \qquad \mathbb{N}_0 \to \mathbb{N}_0, \quad j \mapsto j'
  \]
  are non-decreasing functions such that
  \[
  \lim_{i \to \infty} \frac{\log \lvert X_i Y_{i^*} : X_i
    \rvert}{\log \lvert G : X_i Y_{i^*} \rvert} = \lim_{i \to
    \infty} \frac{\log \lvert X_i Y_{i^*} : Y_{i^*} \rvert}{\log
    \lvert G : X_i Y_{i^*} \rvert} = 0
  \]
  and
  \[
  \lim_{j \to \infty} \frac{\log \lvert X_{j'} Y_j : X_{j'} \rvert}{\log
    \lvert G : X_{j'} Y_j \rvert} = \lim_{j \to \infty} \frac{\log
    \lvert X_{j'} Y_j : Y_j \rvert}{\log \lvert G : X_{j'} Y_j \rvert}
  = 0.
  \]
  Then every closed subgroup $H \le G$ satisfies
  $\hdim_G^\mathcal{X}(H) = \hdim_G^\mathcal{Y}(H)$.
\end{corollary}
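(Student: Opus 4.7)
The plan is to reduce Corollary~\ref{cor:2series} to Lemma~\ref{lem:DPhil} by a reindexing of $\mathcal{Y}$ (and then, symmetrically, of $\mathcal{X}$).  The case where $G$ is finite is trivial, so assume $G$ is infinite.  Before reindexing, I would first observe that the hypothesis already forces $i^* \to \infty$ as $i \to \infty$: if $i^* \le N$ for an infinite set of indices, then along that subsequence $Y_{i^*} \supseteq Y_N$ has bounded index in $G$, while $X_i \to 1$, so $|X_i Y_{i^*} : X_i|$ grows without bound but $|G : X_i Y_{i^*}|$ is bounded, violating the first limit condition.  By the parallel argument, $j' \to \infty$ as $j \to \infty$.

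Next, define the reindexed series $\widetilde{\mathcal{Y}} \colon \widetilde{Y}_i = Y_{i^*}$ for $i \in \mathbb{N}_0$.  Since $i \mapsto i^*$ is non-decreasing and tends to infinity, the sequence $(\widetilde{Y}_i)$ is a descending chain of open normal subgroups with trivial intersection, hence a filtration series of~$G$.  Because $X_i \widetilde{Y}_i = X_i Y_{i^*}$, the first pair of limit conditions in the hypothesis is precisely the hypothesis of Lemma~\ref{lem:DPhil} applied to the pair $(\mathcal{X}, \widetilde{\mathcal{Y}})$; thus
\[
\hdim_G^{\mathcal{X}}(H) \;=\; \hdim_G^{\widetilde{\mathcal{Y}}}(H) \qquad \text{for every closed } H \le G.
\]
On the other hand, $\hdim_G^{\widetilde{\mathcal{Y}}}(H) = \varliminf_i \frac{\log |HY_{i^*} : Y_{i^*}|}{\log |G : Y_{i^*}|}$ is a lower limit taken along the subsequence $(i^*)_{i \in \mathbb{N}_0}$ of the index set of~$\mathcal{Y}$, and a lower limit along any cofinal subsequence is at least the full lower limit; hence $\hdim_G^{\widetilde{\mathcal{Y}}}(H) \ge \hdim_G^{\mathcal{Y}}(H)$.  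Combining gives $\hdim_G^{\mathcal{X}}(H) \ge \hdim_G^{\mathcal{Y}}(H)$.

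For the reverse inequality I would repeat the construction with the roles of $\mathcal{X}$ and $\mathcal{Y}$ exchanged, using the reindexing $j \mapsto j'$ and the second pair of limit conditions; this yields $\hdim_G^{\mathcal{Y}}(H) \ge \hdim_G^{\mathcal{X}}(H)$, completing the proof.

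There is no single hard step; the argument is really just an instance of Lemma~\ref{lem:DPhil} after the correct bookkeeping.  The only points that require a moment's care are (a) verifying that $i^* \to \infty$, so that $\widetilde{\mathcal{Y}}$ is a genuine filtration series and not a chain that stabilises above the identity, and (b) noting that passing to a cofinal subsequence can only raise the $\varliminf$, which is what converts the equality $\hdim_G^{\mathcal{X}}(H) = \hdim_G^{\widetilde{\mathcal{Y}}}(H)$ into the one-sided inequality needed before the symmetric argument closes the gap.
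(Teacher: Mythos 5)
Your proof is correct and takes essentially the same route as the paper: both reindex $\mathcal{Y}$ via $i \mapsto i^*$ (and symmetrically $\mathcal{X}$ via $j \mapsto j'$), apply Lemma~\ref{lem:DPhil} to each reindexed pair, and use that passing to a thinned-out subsequence can only raise the lower limit, chaining the two one-sided inequalities into an equality. Your explicit verification that $i^* \to \infty$ is a detail the paper leaves implicit in the phrase ``obtained by thinning out''.
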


\begin{proof}
  Observe that
  \[
  \mathcal{X}' \colon G = X_{0'} \supseteq X_{1' } \supseteq \ldots, \qquad
  \mathcal{Y}^* \colon G = Y_{0^*} \supseteq Y_{1^*} \supseteq \ldots
  \]
  are filtration series of~$G$, obtained by thinning out $\mathcal{X}$
  and~$\mathcal{Y}$.  Thus Lemma~\ref{lem:DPhil} implies,
  for every closed subgroup $H \le G$,
  \[
  \hdim_G^\mathcal{X}(H) = \hdim_G^{\mathcal{Y}^*}(H) \geq
  \hdim_G^\mathcal{Y}(H) = \hdim_G^{\mathcal{X}'}(H) \geq
  \hdim_G^\mathcal{X}(H). \qedhere
  \]
\end{proof}


\section{Filtration series and Hausdorff spectra for $\mathbb{Z}_p
  \oplus \mathbb{Z}_p$} \label{sec:3}
 
In this section we prove Theorem~\ref{thm:maps-onto-Zp2}.  Observe
that the Hausdorff spectrum of a procyclic pro-$p$ group
$G \cong \mathbb{Z}_p$ is $\hspec(G) = \{0,1\}$, no matter which
filtration series one chooses.  In this section we study, nearly systematically, how the Hausdorff spectrum
$\hspec^\mathcal{S}(G)$ for the abelian pro-$p$ group
$G = \mathbb{Z}_p\oplus \mathbb{Z}_p$ depends on the chosen filtration
series~$\mathcal{S}$.  According to Theorem~\ref{BarneaShalevThm} we
have $\hspec^\mathcal{P}(G) = \{0, \nicefrac{1}{2}, 1\}$ with respect
to the $p$-power series~$\mathcal{P}$.

Writing $v_p \colon \mathbb{Z}_p \to \mathbb{Z} \cup \{\infty\}$ for
the standard $p$-adic valuation on $\mathbb{Z}_p$, we prove the
following quite flexible result.
  
\begin{lemma} \label{lem:general-hdim-formulae} Let
  $G = \mathbb{Z}_p \oplus \mathbb{Z}_p$, and let 
  \[
  \mathcal{S} \colon
  G_i = \langle (p^{a_i},z_i), (0,p^{b_i}) \rangle, \quad
  i \in \mathbb{N}_0,
  \]
  be an arbitrary filtration series of~$G$; this means that
  $a_i, b_i \in \mathbb{N}_0$ and $z_i \in \mathbb{Z}_p$ satisfy the
  conditions: $\mathrm{(i)}$ $(a_i)_{i \in \mathbb{N}_0}$ and
  $(b_i)_{i \in \mathbb{N}_0}$ are non-decreasing sequences
  starting at $a_0 = b_0 = 0$ and diverging to infinity; $\mathrm{(ii)}$ for
  each $i \in \mathbb{N}$,
  \[
  b_{i-1} \le v_p(z_i - p^{a_i - a_{i-1}}z_{i-1}).
  \]

  Let $H$ be a non-trivial, non-open closed subgroup of~$G$.  Then
  either there exists $\lambda \in \mathbb{Z}_p$ such that $H$ is an open
  subgroup of $\langle (1,\lambda)\rangle$ and
  \[
  \hdim_G^\mathcal{S}(H) = \varliminf_{i \to \infty} \; \max \left \{
    \frac{a_i}{a_i+b_i} ,
    1-\frac{v_p(z_i-p^{a_i}\lambda)}{a_i+b_i} \right \},
  \]
  or there exists $\mu \in p\mathbb{Z}_p$ such that $H$ is an open
  subgroup of $\langle (\mu,1) \rangle$ and
  \[
  \hdim_G^\mathcal{S}(H) = \varliminf_{i \to \infty} \; \max \left \{
    \frac{a_i}{a_i+b_i} ,
    1-\frac{v_p(p^{a_i}-z_i\mu)}{a_i+b_i} \right \}.
  \]
\end{lemma}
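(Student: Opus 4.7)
The plan is to break the argument into three pieces: (a) classify the non-trivial, non-open closed subgroups of $G = \mathbb{Z}_p \oplus \mathbb{Z}_p$; (b) compute $\lvert G : G_i \rvert$ and $\lvert H : H \cap G_i \rvert$ explicitly in terms of the data $a_i, b_i, z_i$; (c) substitute into formula~(\ref{equ:hdim-formula}) and simplify.

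For (a): since $G$ is a free $\mathbb{Z}_p$-module of rank~$2$, any closed subgroup is a closed $\mathbb{Z}_p$-submodule. The non-trivial, non-open ones have rank exactly~$1$, so $H = \mathbb{Z}_p w$ for some $w \neq 0$. Writing $w = p^n v$ with $v$ primitive, either the first coordinate of $v$ is a unit (rescale to get $v = (1, \lambda)$ with $\lambda \in \mathbb{Z}_p$), or it lies in $p\mathbb{Z}_p$ and the second must be a unit (rescale to $v = (\mu, 1)$ with $\mu \in p\mathbb{Z}_p$). This exhibits $H$ as an open subgroup of one of the two kinds of procyclic subgroups listed in the statement, and the restriction $\mu \in p\mathbb{Z}_p$ makes the two alternatives disjoint.

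For (b): the determinant of the matrix whose rows are $(p^{a_i}, z_i)$ and $(0, p^{b_i})$ equals $p^{a_i + b_i}$, giving $\lvert G : G_i \rvert = p^{a_i + b_i}$. In the first case, $k \cdot (p^n, p^n\lambda) \in G_i$ holds precisely when $\alpha := k p^{n - a_i}$ lies in $\mathbb{Z}_p$ and $\beta := p^{-b_i}(k p^n \lambda - \alpha z_i)$ does too; rearranging these two valuation inequalities and taking $i$ large enough that $a_i \ge n$ gives
\[
\lvert H : H \cap G_i \rvert \; = \; p^{(a_i - n) + \max(0, \, b_i - c_i)}, \qquad c_i := v_p(z_i - p^{a_i}\lambda).
\]
The analogous computation in the $(\mu,1)$-case, with the roles of the two coordinates swapped, produces the same expression with $c_i$ replaced by $v_p(p^{a_i} - z_i\mu)$. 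In passing, condition~(ii) of the lemma is precisely the same kind of valuation inequality applied to the inclusion $(p^{a_i}, z_i) \in G_{i-1}$, confirming that $\mathcal{S}$ is indeed a filtration series and that the formulae above apply to all sufficiently large $i$.

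For (c): substituting into formula~(\ref{equ:hdim-formula}) and using $a_i + b_i \to \infty$ to absorb the additive constant~$n$, together with the identity $a_i + \max(0, b_i - c_i) = \max(a_i, \, a_i + b_i - c_i)$, yields the two stated formulae after dividing through by $a_i + b_i$. The main delicate point I anticipate is the degenerate case where $z_i = p^{a_i}\lambda$, giving $c_i = \infty$: then the term $1 - c_i/(a_i + b_i)$ is $-\infty$ and is automatically dominated by $a_i/(a_i+b_i)$ inside the $\max$, consistently with the direct index computation, which in that case reduces to $\lvert H : H \cap G_i \rvert = p^{a_i - n}$. Everything else is careful $p$-adic valuation bookkeeping.
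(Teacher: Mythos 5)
Your proposal is correct and follows essentially the same route as the paper: classify the non-trivial, non-open closed subgroups as open subgroups of the maximal procyclic subgroups $\langle(1,\lambda)\rangle$ or $\langle(\mu,1)\rangle$, compute the relevant indices by $p$-adic valuation bookkeeping, and substitute into formula~\eqref{equ:hdim-formula}. The only cosmetic differences are that you compute $\lvert H : H\cap G_i\rvert$ directly and absorb the additive constant $n$ in the limit (in the $(\mu,1)$-case there is also a harmless extra shift by $v_p(\mu)$ in the first valuation condition, which your ``same expression'' remark glosses over but which likewise vanishes upon dividing by $a_i+b_i$), whereas the paper first replaces $H$ by the maximal procyclic subgroup using commensurability-invariance of Hausdorff dimension and computes $\lvert G : HG_i\rvert$.
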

 
\begin{proof}
  Clearly, a non-trivial, non-open closed subgroup $H$ of $G$ has
  analytic dimension $\dim(H) = 1$ and is procyclic.  Consequently,
  $H$ is open and hence of finite index in a maximal procyclic
  subgroup.  The latter are the groups of the form
  $\langle (1, \lambda) \rangle$ with $\lambda \in \mathbb{Z}_p$ and
  $\langle (\mu,1) \rangle$ with $\mu \in p\mathbb{Z}_p$.  Since $G$
  is infinite, the Hausdorff dimension function is constant on
  commensurability classes of subgroups of~$G$, and we may assume that
  $H = \langle (1,\lambda) \rangle$ or $H = \langle (\mu,1) \rangle$
  for suitable $\lambda$ or $\mu$.

  Suppose first that $H = \langle (1,\lambda) \rangle$ with
  $\lambda \in \mathbb{Z}_p$.  Observe that
  $\log_p \lvert G : G_i \rvert = a_i + b_i$ for $i \in \mathbb{N}_0$.
  Furthermore, setting $d_i = \min\{b_i,v_p(z_i-p^{a_i}\lambda)\}$, we
  see that the group $HG_i$ is equal to
  $\langle (1,\lambda), (0,p^{d_i}) \rangle$ so that
  $\log_p \lvert G : HG_i \rvert = d_i$ and
  \begin{align*}
    \log_p \lvert HG_i : G_i \rvert & = \log_p \lvert G : G_i \rvert -
                                      \log_p \lvert G : HG_i \rvert \\
                                    & = a_i + b_i - \min \{b_i, v_p(z_i - p^{a_i} \lambda ) \} \\
                                    & =\max \{a_i, a_i + b_i -
                                      v_p(z_i-p^{a_i} \lambda) \}.
  \end{align*}
  The result follows from~\eqref{equ:hdim-formula}.

  Now suppose that $H = \langle (\mu,1) \rangle$ with
  $\mu \in p\mathbb{Z}_p$.  We argue in a similar way, using for
  $i \in \mathbb{N}_0$ that $\log_p \lvert G : G_i \rvert = a_i + b_i$
  and $\log_p \lvert G : HG_i \rvert = d_i$, where
  $d_i = \min \{ b_i+v_p(\mu), v_p(p^{a_i}-z_i\mu) \}$.
\end{proof}

Next we consider filtration series of
$G = \mathbb{Z}_p \oplus \mathbb{Z}_p$, whose terms come from a fixed
`apartment', corresponding to a decomposition of $\mathbb{Q}_p^{\, 2}$
into a direct sum of two lines.  Such filtration series are somewhat
close to the $p$-power series.

\begin{proposition} \label{pro:close-to-p-power} Let
  $G = \mathbb{Z}_p \oplus \mathbb{Z}_p$, and let
  \[
  \mathcal{S} \colon
  G_i = \langle (p^{a_i},0), (0,p^{b_i}) \rangle, \quad
  i \in \mathbb{N}_0,
  \]
  be a filtration series of~$G$; this means that
  $(a_i)_{i \in \mathbb{N}_0}$ and $(b_i)_{i \in \mathbb{N}_0}$ are
  non-decreasing integer sequences, starting at $a_0 = b_0 = 0$ and
  diverging to infinity.

  Then, writing $x_i = a_i/(a_i+b_i)$ for $i \in \mathbb{N}_0$ and
  putting
    \[
    \xi = \min \left \{ \varliminf_{i \to \infty} x_i, \,
      \varliminf_{i \to \infty} (1-x_i) \right \}, \quad \eta =\max
    \left \{ \varliminf_{i \to \infty} x_i, \, \varliminf_{i \to
        \infty} (1-x_i) \right \},
    \]
    \[
    \zeta = \varliminf_{i \to \infty} \, \max \left \{ x_i, 1-
      x_i\right \},
    \]
    we have
    \[
    \hspec^\mathcal{S}(G) = \{0,\xi , \eta, \zeta, 1\},
    \]
    where
    \begin{equation} \label{eq:restriction} 0 \le \xi \le \eta \le
      1-\zeta \le \nicefrac{1}{2} \qquad \text{or} \qquad 0\le \xi \le
      1- \zeta = 1 - \eta \le \nicefrac{1}{2}.
    \end{equation}
    In particular, $\hspec^\mathcal{S}(G)$ is discrete of size at
    most~$5$.
   
    Conversely, for any $\xi, \eta, \zeta \in [0,1]$ satisfying
    \eqref{eq:restriction} there exists a filtration $\mathcal{S}$ of
    the above form such that
    $\hspec^\mathcal{S}(G)=\{0,\xi, \eta, \zeta, 1 \}$.
\end{proposition}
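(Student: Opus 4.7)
My plan is to specialise Lemma~\ref{lem:general-hdim-formulae} to the case $z_i = 0$ and enumerate the Hausdorff dimensions of all closed subgroups. A non-trivial non-open closed subgroup $H \leq G$ has analytic dimension~$1$ and hence is commensurable with a maximal procyclic subgroup, i.e.\ one of the form $\langle (1, \lambda)\rangle$ with $\lambda \in \mathbb{Z}_p$ or $\langle (\mu, 1)\rangle$ with $\mu \in p\mathbb{Z}_p$; since $\hdim_G^\mathcal{S}$ is constant on commensurability classes, it suffices to evaluate it on these standard representatives. Writing $x_i = a_i/(a_i+b_i)$ and substituting $z_i = 0$, a direct inspection of $HG_i$ (or the lemma's formula, interpreted with $v_p(0) = \infty$) gives $\hdim_G^\mathcal{S}(\langle (1,0)\rangle) = \varliminf x_i$ and $\hdim_G^\mathcal{S}(\langle (0,1)\rangle) = \varliminf (1 - x_i)$. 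For $\lambda \in \mathbb{Z}_p \setminus \{0\}$ the formula reduces to $\varliminf \max\{x_i,\, 1 - x_i - v_p(\lambda)/(a_i+b_i)\}$, and since the correction term vanishes in the limit this equals $\varliminf \max\{x_i, 1-x_i\} = \zeta$; the case $\langle (\mu, 1)\rangle$ with $\mu \in p\mathbb{Z}_p \setminus \{0\}$ is symmetric and also returns $\zeta$. Combining these four values with the trivial and open subgroups yields $\hspec^\mathcal{S}(G) = \{0, \xi, \eta, \zeta, 1\}$.

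Next I would derive the dichotomy~\eqref{eq:restriction}. By subadditivity of $\varliminf$, $\xi + \eta = \varliminf x_i + \varliminf(1 - x_i) \leq 1$, whence $\xi \leq \nicefrac{1}{2}$ and $\eta \leq 1 - \xi$; moreover $\max\{x_i, 1-x_i\} \geq \nicefrac{1}{2}$ forces $\zeta \geq \nicefrac{1}{2}$. Passing, without loss of generality, to a subsequence along which $x_i$ realises $\eta = \varliminf x_i$: if $\eta \leq \nicefrac{1}{2}$, then along this subsequence $\max\{x_i, 1-x_i\} \to 1 - \eta$, forcing $\zeta \leq 1 - \eta$ and producing the first alternative $\xi \leq \eta \leq 1 - \zeta \leq \nicefrac{1}{2}$. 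If $\eta > \nicefrac{1}{2}$, then eventually $x_i > \nicefrac{1}{2}$ along that subsequence, so $\max\{x_i, 1 - x_i\} = x_i \to \eta$, giving $\zeta \leq \eta$; combined with the trivial inequality $\zeta \geq \eta$ (coming from $\max \geq x_i, 1-x_i$) this yields $\zeta = \eta$ and the second alternative $\xi \leq 1 - \zeta = 1 - \eta \leq \nicefrac{1}{2}$.

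For the converse, given any triple $(\xi, \eta, \zeta)$ satisfying~\eqref{eq:restriction}, I would realise it by cycling $x_i$ through rational approximations of a small finite set of target values and then lifting to integer sequences $(a_i, b_i)$ with $a_i + b_i$ growing rapidly. In the first alternative I would cycle $x_i$ through values close to $\xi$, $1 - \eta$ and $1 - \zeta$ (all lying in $[0, \nicefrac{1}{2}]$): then $\max\{x_i, 1 - x_i\}$ takes values close to $1 - \xi$, $1 - \eta$ and $\zeta$ respectively, and the chain $\zeta \leq 1 - \eta \leq 1 - \xi$ forces the $\varliminf$ of the max to equal exactly $\zeta$ while simultaneously $\varliminf x_i = \xi$ and $\varliminf(1 - x_i) = \eta$. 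In the second alternative it suffices to oscillate $x_i$ between values close to $\eta$ and $1 - \xi$ (both $\geq \nicefrac{1}{2}$). Passing from a rational $x_i = p/q$ to integers $(a_i, b_i) = (n_i p, n_i(q-p))$ with $n_i$ increasing rapidly clears denominators and ensures the non-decreasing constraints on $(a_i), (b_i)$. The main nuisance of the whole proof, as I see it, is this last realisation step: verifying that the $\varliminf$s produced by the interpolated integer sequences are exactly the prescribed values, and not spurious intermediate ones; this is routine bookkeeping made feasible precisely by the inequalities in~\eqref{eq:restriction}.
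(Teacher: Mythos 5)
Your proof is correct and follows essentially the same route as the paper: specialising Lemma~\ref{lem:general-hdim-formulae} to $z_i=0$, reading off the three possible dimensions $\varliminf x_i$, $\varliminf (1-x_i)$ and $\zeta$ for the procyclic subgroups, deriving \eqref{eq:restriction} by the same two-case subsequence argument, and realising the converse by cycling $x_i$ through finitely many target values (yours differ from the paper's only by the symmetry $x \mapsto 1-x$). The one slip is the parenthetical claim that $\xi$, $1-\eta$, $1-\zeta$ all lie in $[0,\nicefrac{1}{2}]$ --- in fact $1-\eta \ge \nicefrac{1}{2}$ in the first alternative --- but your subsequent evaluation $\max\{x_i,1-x_i\} \approx 1-\eta$ along that subsequence is exactly what holds when $1-\eta \ge \nicefrac{1}{2}$, so the realisation goes through unchanged.
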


\begin{proof}
  The trivial subgroup has Hausdorff dimension~$0$, and
  open subgroups have Hausdorff dimension $1$ in~$G$.  It remains to
  deal with non-trivial, non-open closed subgroups of~$G$.  By
  Lemma~\ref{lem:general-hdim-formulae}, it suffices to consider
  procyclic subgroups of the form $\langle (1,\lambda) \rangle$ and
  $\langle (\lambda,1) \rangle$ for $\lambda \in \mathbb{Z}_p$.
  Furthermore, Lemma~\ref{lem:general-hdim-formulae} yields
  \[
  \hdim_G^\mathcal{S}(\langle (1,0)\rangle) = \varliminf x_i, \qquad
  \hdim_G^\mathcal{S}(\langle (0,1) \rangle) = \varliminf (1-x_i)
  \]
  and for $\lambda \in \mathbb{Z}_p \setminus \{0\}$,
  \[
  \hdim_G^\mathcal{S}(\langle (1,\lambda) \rangle) =
  \hdim_G^\mathcal{S}(\langle (\lambda,1) \rangle) = \varliminf \,
  \max \{ x_i , 1-x_i \} = \zeta. 
  \]
  
  Thus $\hspec^\mathcal{S}(G) = \{0,\xi,\eta,\zeta,1\}$ with
  $\xi, \eta$ as defined in the statement of the proposition.  We need
  to show that \eqref{eq:restriction} holds.  Without loss of
  generality we may assume $\xi = \varliminf x_i$ and
  $\eta = \varliminf (1-x_i)$.  We observe that
  $\xi \le \eta \le \zeta$ and $\zeta \ge \nicefrac{1}{2}$.

  \medskip
  
  \noindent \underline{Case 1.} Suppose $\eta \le \nicefrac{1}{2}$.
  Whenever $1-x_i$ is close to $\eta$, then $x_i$ is close to
  $1-\eta$.  Thus $\max \{x_i, 1-x_i\}$ is close to
  $\max \{1-\eta,\eta\} = 1 - \eta$ infinitely often, and hence $\zeta \le 1-\eta$.
  This establishes
  $0 \le \xi \le \eta \le 1-\zeta \le \nicefrac{1}{2}$.
  
  \medskip

  \noindent \underline{Case 2.} Suppose $\eta > \nicefrac{1}{2}$.
  Whenever $x_i$ is close to $\xi$, then $1- x_i$ is close
  to $1-\xi$.  Thus $1-x_i$ is close to
  $1-\xi$ infinitely often, and hence $\eta \le 1 - \xi$.
  Whenever $1-x_i$ is close to $\eta$, then $x_i$ is close
  to $1-\eta$.  Thus $\max \{x_i, 1-x_i\}$ is close to
  $\max \{1-\eta,\eta\} = \eta$ infinitely often, and then $\zeta \le \eta$ implies $\zeta =
  \eta$.
  This establishes
  $0 \le \xi \le 1-\zeta = 1 - \eta \le \nicefrac{1}{2}$.
  
  \medskip

  For the converse statement, given any
    $\xi, \eta, \zeta \in [0,1]$ satisfying the first condition
    in~\eqref{eq:restriction}, we can choose non-decreasing integer
    sequences $(a_i)_{i\in \mathbb{N}_0}$ and
    $(b_i)_{i \in \mathbb{N}_0}$ with $a_0 = b_0 = 0$ such that
    $x_i = a_i/(a_i+b_i)$ satisfies
  \begin{equation*}
    x_i \to
    \begin{cases}
      \xi & \text{as $i \to \infty$ subject to $i \equiv_3 0$,} \\
      1-\eta & \text{as $i \to \infty$ subject to $i \equiv_3 1$,} \\
      \zeta & \text{as $i \to \infty$ subject to $i \equiv_3 2$.}
    \end{cases}
  \end{equation*}
  This yields $\hspec^\mathcal{S}(G) = \{0,\xi, \eta, \zeta ,1 \}$ for
  $\mathcal{S}:G_i=\langle (p^{a_i},0),(0,p^{b_i})\rangle$,
  $i \in \mathbb{N}_0$.

  Similarly, given any $\xi, \eta, \zeta \in [0,1]$ satisfying the
  second condition in~\eqref{eq:restriction}, we can arrange that
  \begin{equation*}
    x_i \to
    \begin{cases}
      \xi & \text{as $i \to \infty$ subject to $i \equiv_2 0$,} \\
      1-\zeta & \text{as $i \to \infty$ subject to $i \equiv_2 1$,}
    \end{cases}
  \end{equation*}
  and
  $\hspec^\mathcal{S}(G) = \{0,\xi, \zeta, 1 \} = \{0,\xi, \eta, \zeta
  ,1 \}$ for the corresponding filtration $\mathcal{S}$.
\end{proof}

As recorded in the introduction, it would be interesting to find out
just how large the finite Hausdorff spectra $\hspec^\mathcal{S}(G)$ of
a $p$-adic analytic pro-$p$ group $G$ can be in relation to the
analytic dimension~$\dim(G)$.  Regarding infinite Hausdorff spectra we
obtain, rather unexpectedly, the following result.

\begin{proposition} \label{pro:Zp2}
  Let $G = \mathbb{Z}_p \oplus \mathbb{Z}_p$.  Then
  \[
  \mathcal{S} \colon
  G_i = \langle (p^{a_i},i p^{a_i}), (0,p^{b_i}) \rangle, \quad
  i \in \mathbb{N}_0,
  \]
  where $a_0 = b_0 = 0$ and $a_i = p^i$, $b_i = p^{i+1}$ for $i
  \geq 1$, is a filtration series of~$G$ such that 
  \[
  [ \nicefrac{1}{p+1}, \nicefrac{p-1}{p+1} ] \subseteq \hspec^\mathcal{S}(G).
  \]
\end{proposition}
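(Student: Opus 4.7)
The plan is to apply Lemma~\ref{lem:general-hdim-formulae} directly and show that the Hausdorff dimensions of the procyclic subgroups $\langle(1,\lambda)\rangle$, as $\lambda$ ranges over $\mathbb{Z}_p$, already fill the claimed interval. First I would check that $\mathcal{S}$ is a filtration series by verifying conditions~(i) and~(ii) of Lemma~\ref{lem:general-hdim-formulae}. Condition~(i) is immediate, and for~(ii) one computes $z_i - p^{a_i-a_{i-1}}z_{i-1} = ip^{a_i} - (i-1)p^{a_i} = p^{a_i}$, so that $v_p(z_i - p^{a_i-a_{i-1}}z_{i-1}) = a_i = p^i \geq b_{i-1}$ (with equality for $i\ge 2$). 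Since $a_i + b_i = p^i(p+1)$ and $z_i - p^{a_i}\lambda = p^{a_i}(i-\lambda)$ for $i\ge 1$, the first formula of Lemma~\ref{lem:general-hdim-formulae} specialises to
\[
\hdim_G^{\mathcal{S}}\bigl(\langle(1,\lambda)\rangle\bigr) = \varliminf_{i\to\infty} \max\left\{ \frac{1}{p+1},\; \frac{p}{p+1} - \frac{v_p(i-\lambda)}{p^i(p+1)} \right\}.
\]
The complementary subgroups $\langle(\mu,1)\rangle$ with $\mu \in p\mathbb{Z}_p$ contribute only the value $p/(p+1)$, which lies above the target interval and can be ignored.

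Given any $c \in [\nicefrac{1}{p+1},\nicefrac{p-1}{p+1}]$, set $\alpha = p - (p+1)c \in [1,p-1]$. I would construct $\lambda = \lambda_{\alpha}\in\mathbb{Z}_p$ realising $\hdim_G^\mathcal{S}(\langle(1,\lambda)\rangle) = c = (p-\alpha)/(p+1)$ by defining a strictly increasing sequence of positive integers $(i_k)_{k\ge1}$ recursively via $i_{k+1} = i_k + p^{m_k}$, where $m_k := \lceil \alpha p^{i_k}\rceil$. Setting $\lambda := \lim_{k\to\infty} i_k$ in $\mathbb{Z}_p$ (the sequence is Cauchy because $i_{k+1} \equiv i_k \pmod{p^{m_k}}$ and $m_k\to\infty$), the telescoping identity $\lambda - i_k = \sum_{j\ge k} p^{m_j}$ together with the strict growth $m_k < m_{k+1}$ forces $v_p(i_k - \lambda) = m_k$ exactly, so $v_p(i_k-\lambda)/p^{i_k} \to \alpha$ along the subsequence. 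For any other index $i$ with $i_k < i < i_{k+1}$, writing $r = i - i_k \in (0, p^{m_k})$ and applying the ultrametric inequality gives $v_p(i-\lambda) = v_p(r - \sum_{j\ge k}p^{m_j}) = v_p(r) \le m_k - 1$, whence $v_p(i-\lambda)/p^i \le (m_k-1)/p^{i_k+1} \sim \alpha/p$. Consequently, along $(i_k)$ the $\max$ tends to $(p-\alpha)/(p+1) = c$, while off $(i_k)$ it is bounded below, in the limit, by $(p-\alpha/p)/(p+1)\ge (p-\alpha)/(p+1)$ (equivalent to $(p-1)\alpha\ge0$). Hence $\varliminf_i \max = c$, so $c \in \hspec^\mathcal{S}(G)$.

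The main obstacle is the two-sided control in the second step: one must not only drive the $\max$ down to $c$ along a subsequence, by pushing $v_p(i_k - \lambda)$ up to nearly $\alpha p^{i_k}$, but also prevent the $\varliminf$ from dropping below $c$ by ensuring that $v_p(i-\lambda)/p^i$ stays bounded above by $\alpha$ at all other indices. The sharp choice $i_{k+1} - i_k = p^{m_k}$ (rather than a larger power of $p$) is precisely what yields both $v_p(i_k-\lambda) = m_k$ exactly on the subsequence and the ultrametric cancellation bound $v_p(i-\lambda) \le m_k - 1$ off it; any looser choice would risk destroying one or the other half of this balance.
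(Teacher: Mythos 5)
Your proposal is correct and follows essentially the same route as the paper: after specialising Lemma~\ref{lem:general-hdim-formulae} to $r_i = \nicefrac{p}{p+1} - v_p(i-\lambda)/(p^i(p+1))$, you build $\lambda$ as the $p$-adic limit of a recursively defined integer sequence with gaps $p^{\lceil \alpha p^{i_k}\rceil}$, which matches the paper's choice $f(m)=\lceil p^{m+1}-p^m(p+1)\nu-1\rceil$ up to a shift of one in the exponent, and your two-sided control (exact valuation along the subsequence, ultrametric bound $v_p(i-\lambda)\le m_k-1$ off it) is the same balance the paper strikes. No gaps.
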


\begin{proof}
  Let $\nu \in [ \nicefrac{1}{p+1}, \nicefrac{p-1}{p+1} ]$.  Then
  \[
  f \colon \mathbb{N} \to \mathbb{N}, \quad
  f(m) = \lceil p^{m+1} - p^m (p+1)\nu - 1 \rceil
  \]
  is a strictly increasing function.  Set $\lambda_0=1$ and, for $j \ge 1$,
  define $\lambda_j \in \mathbb{N}$ by
  \[
  \lambda_j = \lambda_{j-1} + p^{f(\lambda_{j-1})}.
  \]
  Observe that, as $j \to \infty$, the $\lambda_j$ converge with
  respect to the $p$-adic topology to some $\lambda \in \mathbb{Z}_p$,
  and moreover
  $v_p(\lambda - \lambda_j) = v_p(\lambda_{j+1} - \lambda_j) =
  f(\lambda_j)$
  for $j \ge 0$.  We show that
  the closed subgroup $H_{\lambda} = \langle (1,\lambda) \rangle$ satisfies
  $\hdim_G^\mathcal{S}(H_\lambda) = \nu$.

  Lemma~\ref{lem:general-hdim-formulae} yields
  \[
  \hdim_G^\mathcal{S}(H_\lambda) =\varliminf \, \max \left\{
    \nicefrac{1}{p+1}, r_i \right\}, \qquad \text{where} \quad r_i =
  \frac{p^{i+1} - v_p(i-\lambda)}{p^i (p+1)}.
  \]
  For $j \ge 0$ we observe that
  \[
  r_{\lambda_j} = \frac{p^{ \lambda_j +1} - v_p(\lambda_j
    -\lambda)}{p^{\lambda_j} (p+1)} = \frac{p^{ \lambda_j +1} -
    f(\lambda_j)}{p^{\lambda_j} (p+1)} = \frac{p^{ \lambda_j +1} -
    \lceil p^{\lambda_j +1} - p^{\lambda_j} (p+1) \nu - 1
    \rceil}{p^{\lambda_j} (p+1)}.
  \]
  As $\lceil x-1 \rceil \le x \le \lceil x \rceil$ for any
  $x \in \mathbb{R}$, we obtain
  \[
  \nu \le r_{\lambda_j}\le \nu+\frac{1}{p^{\lambda_j}(p+1)}, \qquad
  \text{for $j \ge 0$},
  \]
  and thus 
  \[
  \hdim_G^\mathcal{S}(H_\lambda) \le \nu.
  \]

  It suffices to show that
  \[
  r_i\ge \nu \qquad \text{for all $i \in \mathbb{N} \smallsetminus \{
    \lambda_j \mid j \ge 0 \}$.}
  \]
  Let $i \in \mathbb{N}$ and $j \ge 0$ such that
  $\lambda_j < i< \lambda_{j+1}$.  Since $i < \lambda_{j+1}$, we may
  write $i$ as $i = i_0 + i_1 p+\ldots +i_s p^s$ in base~$p$, where
  $0 \le i_0, \ldots, i_s < p$ and $s \le f(\lambda_j)$.  By the
  construction of $\lambda$, this yields
  \[
  v_p(i-\lambda) \le f(\lambda_j) = \lceil
  p^{\lambda_j+1}-p^{\lambda_j}(p+1)\nu-1 \rceil\le
  p^{\lambda_j+1}-p^{\lambda_j}(p+1)\nu
  \]
  and thus
  \[
  r_i\ge \frac{p^{i+1}-(p^{\lambda_j+1} -
    p^{\lambda_j}(p+1)\nu)}{p^i(p+1)} =
  \frac{p-p^{\lambda_j-i+1}+p^{\lambda_j-i}(p+1)\nu}{p+1}.
  \]
  Using $\lambda_j < i$ we deduce that
  \[
 r_i\ge \frac{p-1+p^{\lambda_j-i}(p+1)\nu}{p+1}\ge \frac{p-1}{p+1}\ge
 \nu. \qedhere
 \]
\end{proof}

\begin{proof}[Proof of Theorem~\ref{thm:maps-onto-Zp2}]
  The assertion follows from Propositions~\ref{pro:Zp2}
  and~\ref{pro:shifting-hspec}.
\end{proof}


\section{Hausdorff dimension with respect to the lower $p$-series, the
  Frattini series and the dimension subgroup series} \label{sec:4}

In this section we establish Proposition~\ref{pro:hdim-all-equal} and
Theorem~\ref{thm:Zelmanov-characterisation}.  We begin, however, with
an example illustrating that Hausdorff dimension with respect to the
lower $p$-series is somewhat more delicate.

\begin{example} \label{exa:lower-p-series} Fix $m \in \mathbb{N}$ and
  consider the cyclotomic extension of the ring of $p$-adic integers,
  $\mathfrak{O} = \mathbb{Z}_p[\zeta]$, where $\zeta$ denotes a
  primitive $p^m$th root of unity.  Recall that the cyclotomic field
  $\mathbb{Q}_p(\zeta)$ is a totally ramified extension of
  $\mathbb{Q}_p$ of degree~$\varphi(p^m) = (p-1)p^{m-1}$.  Indeed,
  $\pi = \zeta -1$ is a uniformising element and
  \[
  \mathfrak{O}_+ = \mathbb{Z}_p \oplus \pi \mathbb{Z}_p \oplus \ldots
  \oplus \pi^{\varphi(p^m)-1} \mathbb{Z}_p \cong \mathbb{Z}_p^{\,
    \varphi(p^m)}.
  \]
  Furthermore, we have
  $\pi^{\varphi(p^m)} \mathfrak{O} = p \mathfrak{O}$;
  compare~\cite[IV, Prop.~17]{Se79}.

  We choose $d \in \mathbb{N}$ and form the semidirect product
  $G = T \ltimes A$, where
  \begin{itemize}
  \item $T = \langle s_0, s_1, \ldots, s_{d-1} \rangle \cong
    \mathbb{Z}_p^{\, d}$,
  \item
    $A = \langle a_0, \ldots, a_{\varphi(p^m)-1} \rangle \cong
    \mathfrak{O}_+$
    via $\psi \colon A \to \mathfrak{O}$,
    $\prod a_i^{\, \lambda_i} \mapsto \sum \lambda_i \pi^i$ and
  \item the action of $T$ on $A$ is given by
    \[
    (b^{s_0}) \psi = b \psi \cdot \zeta \quad \text{and} \quad [b,s_1] =
    \ldots = [b,s_{d-1}] = 1 \quad  \text{for $b \in A$.}
    \]
  \end{itemize}
  Clearly, $G$ has analytic dimension $\dim (G) = d + \varphi(p^m)$.
  It is straightforward to compute the terms of the lower $p$-series
  of~$G$:
  \[
  \mathcal{L} \colon P_i(G) = \langle s_0^{\, p^{i-1}},
  \ldots, s_d^{\, p^{i-1}} \rangle \ltimes A_i,
  \quad \text{where $A_i \psi = \pi^{i-1} \mathfrak{O}$.}
  \]
  In particular, this gives
  $\log_p \lvert P_i(G) : P_{i+1}(G) \rvert = d+1$.  We deduce, for
  instance,
  \[
  \hdim_G^\mathcal{L} ( \langle s_0 \rangle ) = \nicefrac{1}{d+1} \quad
  \text{and} \quad \hdim_G^\mathcal{L}(\langle a_0 \rangle) =
  \nicefrac{1}{(d+1) \varphi(p^m)}
  \]
  in contrast to Theorem~\ref{BarneaShalevThm}.  A routine
  verification yields
  \[
  \hspec^\mathcal{L}(G) = \big\{ \nicefrac{j}{(d+1) \varphi(p^m)} \mid j \in \{ 0,
  1, \ldots, (d+1) \varphi(p^m) \} \big\}.
  \]
  Indeed, one shows easily for any $H \leq G$ that
  \[
  \hdim_G^\mathcal{L}(H) = \nicefrac{1}{d+1} \left(
    \dim(HA/A) + \nicefrac{1}{\varphi(p^m)} \dim(H \cap A) \right).
  \]
  In particular, for $0 \leq j \leq d$ and
  $0 \leq k \leq \varphi(p^m)$ the abelian groups
  \[
    H_{j,k}  = \langle s_0^{\, p^m}, \ldots, s_{j-1}^{\, p^m}, a_0, \ldots, a_{k-1} \rangle
  \]
  have Hausdorff dimension
  \[
  \hdim_G^\mathcal{L}(H_{j,k}) = \nicefrac{1}{d+1}\, (j +
  \nicefrac{k}{\varphi(p^m)}).   
  \]

  We observe that 
  \[
  \frac{\lvert \hspec^\mathcal{L}(G) \rvert}{\dim(G)} = \frac{(d+1)
    \varphi(p^m) + 1}{d + \varphi(p^m)} \to d+1 \quad \text{as $m \to \infty$}
  \]
  is unbounded as $d$ tends to infinity.
\end{example}

Next we show that the Hausdorff dimension functions with respect to
the $p$-power series, the Frattini series and the dimension subgroup
series coincide on $p$-adic analytic groups.

\begin{proof}[Proof of Proposition~\ref{pro:hdim-all-equal}] Being
  $p$-adic analytic, the group $G$ has finite rank.  Let $H \le G$ be
  a closed subgroup, and let
  $\mathcal{P} \colon G_i = \pi_i(G) = G^{p^i}$, $i \in \mathbb{N}_0$,
  denote the $p$-power series of~$G$.

  First we compare $\mathcal{P}$ to the Frattini series
  $\mathcal{F} \colon \Phi_i(G)$, $i \in \mathbb{N}_0$.  Clearly,
  $G_i \subseteq \Phi_i(G)$ for all $i \in \mathbb{N}_0$.  On the other hand,
  \cite[Prop.~3.9 and Th.~4.5]{DDMS99} allow us to deduce that there exists
  $j \in \mathbb{N}_0$ such that $G_i$ and $\Phi_i(G)$ are uniformly
  powerful for all~$i \ge j$.  Writing $d = \dim(G)$, we deduce, in
  particular, that for $i \ge j$ there are $x_1, \ldots, x_d \in G$
  such that
  $G_i = \langle x_1^{\, p^i}, \ldots, x_d^{\, p^i} \rangle$, and
  consequently
  $G_{i+1} = \langle x_1^{\, p^{i+1}}, \ldots, x_d^{\, p^{i+1}}
  \rangle = G_i^{\, p}$.
  This gives
  $\log_p \lvert G_i : G_{i+1} \rvert = \dim(G) = \log_p \lvert
  \Phi_i(G) : \Phi_{i+1}(G) \rvert$ for all $i \ge j$, and thus
  \[
  \log_p \lvert \Phi_i(G) : G_i \rvert =  \log_p \lvert \Phi_j(G) :
  G_j \rvert
  \] 
  is constant for $i \ge j$.  Therefore Lemma~\ref{lem:DPhil} yields
  $\hdim_G^\mathcal{P}(H) = \hdim_G^\mathcal{F}(H)$.

  Next we compare $\mathcal{P}$ to the dimension subgroup series
  $\mathcal{D} \colon D_i(G)$, $i \in \mathbb{N}$.  Clearly,
  $G_i \subseteq D_i(G)$ for all $i \in \mathbb{N}$.  According to
  the argument used above and \cite[Th.~11.4, 11.5;
  Lem.~11.22]{DDMS99}, we find $j \in \mathbb{N}$ such that:
  $G_i$ and $D_i(G)$ are uniformly powerful, and
  $D_{pi}(G) = D_i(G)^p$ for all $i \ge j$.  Consider the thinned out
  filtration series $\mathcal{D}^* \colon D_i^*(G) = D_{p^i j}(G)$,
  $i \in \mathbb{N}_0$.
  Then $G_i \subseteq D_{i-j}^*$ for $i \ge j$, with
  $\log_p \lvert G_i : G_{i+1} \rvert = \dim(G) = \log_p \lvert
  D_{i-j}^*(G) : D_{i-j+1}^*(G) \rvert$ and thus
  \[
  \log_p \lvert D_{i-j}^*(G) : G_i \rvert =  \log_p \lvert D_0^*(G) :
  G_j \rvert
  \] 
  is constant.  We conclude from Lemma~\ref{lem:DPhil} that
  $\hdim_G^\mathcal{P}(H) = \hdim_G^{\mathcal{D}^*}\!(H)$.
  Furthermore, we note that
  $\log_p \lvert D_i^*(G) : D_k(G) \rvert \leq \dim(G)$
  for $p^i j \le k \le p^{i+1}j$.  Hence, associating each term
  $D_i^*(G)$ of the series $\mathcal{D}^*$ with the terms
  $D_{p^ij}(G)$, $D_{p^ij+1}(G)$, \ldots, $D_{p^{i+1}j-1}(G)$ of the
  series $\mathcal{D}$, we employ Corollary~\ref{cor:2series} to
  deduce that $\hdim_G^{\mathcal{D}^*}\!(H) = \hdim_G^\mathcal{D}(H)$.
\end{proof}


  In preparation for the proofs of
  Theorem~\ref{thm:Zelmanov-characterisation} in this section and
  Theorem~\ref{thm:soluble-interval} in the next section we establish
  an auxiliary result of general interest.

  \begin{proposition}\label{pro:p-adic-analytic-qu-0}
    Let $G$ be a finitely generated pro-$p$ group, and let
    $\mathcal{S}$ be any one of the $p$-power series~$\mathcal{P}$,
    the Frattini series~$\mathcal{F}$ or the dimension subgroup
    series~$\mathcal{D}$ of~$G$.  Let $N \trianglelefteq H \le G$ be
    closed subgroups.

    Suppose that $G$ is not $p$-adic analytic, but $H/N$ is $p$-adic
    analytic.  Then
    \[
    \hdim_G^\mathcal{S}(H) = \hdim_G^\mathcal{S}(N).
    \]
  \end{proposition}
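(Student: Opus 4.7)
The plan is to show $\hdim_G^\mathcal{S}(H) \le \hdim_G^\mathcal{S}(N)$; the reverse inequality is automatic from $N \le H$. Writing
\[
  \log_p \lvert HG_i : G_i \rvert = \log_p \lvert HG_i : NG_i \rvert + \log_p \lvert NG_i : G_i \rvert
\]
and using the elementary fact that $\varliminf(a_i + b_i) = \varliminf b_i$ whenever $a_i \to 0$, it suffices to prove
\[
  \lim_{i \to \infty} \frac{\log_p \lvert HG_i : NG_i \rvert}{\log_p \lvert G : G_i \rvert} = 0.
\]

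I would first bound the numerator. Since $N \trianglelefteq H$, one checks $H \cap NG_i = N(H \cap G_i)$, so $HG_i/NG_i \cong H/N(H \cap G_i)$ is a continuous quotient of $H/N$. For $\mathcal{S} \in \{\mathcal{P}, \mathcal{F}\}$, the inclusions $G_i \supseteq G^{p^i} \supseteq H^{p^i}$ force $N(H \cap G_i) \supseteq N H^{p^i}$, so $HG_i/NG_i$ is a quotient of $(H/N)/(H/N)^{p^i}$. For $\mathcal{S} = \mathcal{D}$, the inclusion $D_i(H) \le D_i(G)$ (visible from the group-ring definition via augmentation ideals) together with the standard identity $D_i(H/N) = D_i(H) N / N$ shows that $HG_i/NG_i$ is a quotient of $(H/N) / D_i(H/N)$. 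Since $H/N$ is $p$-adic analytic of analytic dimension $d$, say, familiar facts about uniform pro-$p$ groups (cf.~\cite{DDMS99}) yield
\[
  \log_p \lvert HG_i : NG_i \rvert \le d i + O(1) \qquad \text{for $\mathcal{S} \in \{\mathcal{P}, \mathcal{F}\}$,}
\]
and $\log_p \lvert HG_i : NG_i \rvert \le d \log_p i + O(1)$ for $\mathcal{S} = \mathcal{D}$.

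For the denominator I would appeal to the classical characterization (Lazard; see also~\cite{DDMS99}) that a finitely generated pro-$p$ group is $p$-adic analytic precisely when $\log_p \lvert G : G_i \rvert = O(i)$ for $\mathcal{S} \in \{\mathcal{P}, \mathcal{F}\}$, respectively $O(\log i)$ for $\mathcal{S} = \mathcal{D}$. Since $G$ is by hypothesis not $p$-adic analytic, this should furnish $\log_p \lvert G : G_i \rvert / i \to \infty$ (respectively $\log_p \lvert G : G_i \rvert / \log i \to \infty$) as $i \to \infty$, which combined with the numerator bound above forces the relevant quotient to tend to zero, proving the proposition.

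The principal obstacle is the last step: one must upgrade the qualitative failure of the relevant $O$-bound into the quantitative divergence of $\log_p \lvert G : G_i \rvert / i$, rather than mere unboundedness along a subsequence. I expect this to be handled by a rigidity argument for the rank invariants $d(G_i)$ along the chosen series: in Lubotzky--Mann style, if $d(G_i)$ remained bounded along some cofinal subsequence then every sufficiently deep $G_i$ would already be uniformly powerful of bounded rank, so $G$ itself would have finite rank and therefore be $p$-adic analytic, a contradiction; for $\mathcal{S} = \mathcal{D}$ the analogous rigidity is accessed through the associated graded restricted $\mathbb{F}_p$-Lie algebra of $G$. With this rigidity in place the three numerical estimates assemble to give the required zero limit.
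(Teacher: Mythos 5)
Your argument has exactly the same skeleton as the paper's proof: the same splitting of $\log_p \lvert HG_i : G_i\rvert$ at $NG_i$, the same identification of $HG_i/NG_i$ with a quotient of $H/N$ by a term of its own filtration, and the same numerator bounds $di+O(1)$ for $\mathcal{P},\mathcal{F}$ and $d\log_p i+O(1)$ for $\mathcal{D}$ coming from the analyticity of $H/N$. The one place where you have a genuine gap is the denominator lower bound, which you correctly flag as the principal obstacle but do not establish. The paper closes it with three specific quantitative inputs rather than a soft ``not $O(i)$ implies $/i\to\infty$'' upgrade: for $\mathcal{P}$, Lazard's characterisation gives the dichotomy that a non-analytic finitely generated pro-$p$ group satisfies $\log_p\lvert G:G^{p^i}\rvert \ge p^i$ for all $i$ (see \cite[App.~A.3~(3.8.3)]{La65} and \cite[Cor.~11.6, 11.19]{DDMS99}), an exponential bound far stronger than the superlinearity you aim for; for $\mathcal{F}$, \cite[Prop.~3.9]{DDMS99} gives $d(\Phi_i(G))=\log_p\lvert\Phi_i(G):\Phi_{i+1}(G)\rvert\to\infty$, whence the index grows superlinearly; for $\mathcal{D}$, \cite[Th.~11.4]{DDMS99} gives $\log_p\lvert G:D_i(G)\rvert\ge i-1$ for non-analytic $G$ (strict descent of the dimension series), which already beats your $O(\log i)$ numerator with no upgrading step at all.

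Two problems with your proposed patch. First, the step ``if $d(G_i)$ remained bounded along some cofinal subsequence then every sufficiently deep $G_i$ would already be uniformly powerful of bounded rank'' is a non sequitur: a bounded number of topological generators does not make a pro-$p$ group powerful or of bounded rank (a free pro-$p$ group of rank $2$ is $2$-generated and has infinite rank), so boundedness of $d(G_i)$ along the series does not by itself yield finite rank; the correct statements are precisely the cited results on the Frattini and dimension series, and for the $p$-power series the clean route is Lazard's gap theorem rather than a rank-rigidity argument along $(G^{p^i})$. Second, for $\mathcal{D}$ your plan of characterising analyticity by $\log_p\lvert G:D_i(G)\rvert=O(\log i)$ and then upgrading its failure is an unnecessary detour, since the linear lower bound above is available unconditionally for infinite non-analytic groups. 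In short: the architecture matches the paper's, but the proof is incomplete exactly where you said it was, and the sketched repair needs to be replaced by the precise references.
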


  \begin{proof}
    For $i \in \mathbb{N}_0$ we denote the $i$th term of $\mathcal{S}$
    by~$G_i$ and the $i$th term of the corresponding filtration of $H$
    by $H_i$, i.e.,
    \[
    G_i =
    \begin{cases}
      \pi_i(G) = G^{p^i} & \text{if $\mathcal{S} = \mathcal{P}$,} \\
      \Phi_i(G) & \text{if $\mathcal{S} = \mathcal{F}$,} \\
      D_i(G) & \text{if $\mathcal{S} = \mathcal{D}$}
  \end{cases}
  \quad \text{and} \quad
    H_i =
    \begin{cases}
      \pi_i(H) = H^{p^i} & \text{if $\mathcal{S} = \mathcal{P}$,} \\
      \Phi_i(H) & \text{if $\mathcal{S} = \mathcal{F}$,} \\
      D_i(H) & \text{if $\mathcal{S} = \mathcal{D}$.}
  \end{cases}
  \]
 
  We observe that, for $i \in \mathbb{N}_0$,
  \begin{equation}\label{equ:split-the-sum}
    \frac{\log_p \lvert H G_i : G_i \rvert}{\log_p \lvert G : G_i
      \rvert} = \frac{\log_p \lvert H G_i : N G_i \rvert}{\log_p \lvert
      G : G_i \rvert} + \frac{\log_p \lvert N G_i : G_i \rvert}{\log_p
      \lvert G : G_i \rvert}
  \end{equation}
  and
  \[
  \lvert H G_i : N G_i \rvert = \lvert H/N : (H \cap G_i)N/N \rvert
  \leq \lvert H/N : H_iN/N \rvert.
  \]
  Below we show that
  \begin{equation}\label{equ:tends-to-0}
    \frac{\log_p \lvert H /N : H_i N/N \rvert}{\log_p \lvert G : G_i
      \rvert} \underset{i \to \infty}{\longrightarrow} 0,
  \end{equation}
  and hence, taking lower limits in~\eqref{equ:split-the-sum}, we
  obtain
  \[
  \hdim_G^\mathcal{S}(H) = 0 + \varliminf \frac{\log_p \lvert
    N G_i : G_i \rvert}{\log_p \lvert G : G_i \rvert} =
  \hdim_G^\mathcal{S}(N).
  \]

  Thus it remains to establish~\eqref{equ:tends-to-0}.  Replacing $H$ by an
  open subgroup, if necessary, we may assume that $H/N$ is uniformly
  powerful.  Let $d = d(H/N)$ denote the minimal number of generators
  of~$H/N$.

  First suppose $\mathcal{S} = \mathcal{P}$.  Since $H/N$ is uniformly
  powerful, we conclude that $\log_p \lvert H/N : H_iN/N \rvert = di$
  for all $i \in \mathbb{N}_0$.  Furthermore, a characterisation of
  $p$-adic analytic pro-$p$ groups due to Lazard implies that
  $\log_p \lvert G : G_i \rvert \geq p^i$ for all
  $i \in \mathbb{N}_0$; see \cite[App.~A.3~(3.8.3)]{La65} and compare
  \cite[Cor.\ 11.6 and 11.19]{DDMS99}.  This yields
  \[
  \frac{\log_p \lvert H/N : H_iN/N \rvert}{\log_p \lvert G : G_i
    \rvert} \leq \frac{di}{p^i} \underset{i \to
    \infty}{\longrightarrow} 0.
  \]

  Next suppose $\mathcal{S} = \mathcal{F}$.  By
  \cite[Prop.~3.9]{DDMS99}, for $r \in \mathbb{N}$ there exists
  $i(r) \in \mathbb{N}$ such that for all $i \in \mathbb{N}$ with
  $i \geq i(r)$ we have
  $\log_p \lvert G_i : G_{i+1} \rvert = d(G_i) \geq r$; here $d(G_i)$
  denotes the minimal number of generators of~$G_i$.  On the other
  hand, $H^{p^i} \subseteq H_i$ and hence
  $\log_p \lvert H/N : H_iN/N \rvert \leq di$ for all
  $i \in \mathbb{N}_0$.  This implies
  \[
  \frac{\log_p \lvert H/N : H_iN/N \rvert}{\log_p \lvert G :
    G_i \rvert} \leq \frac{di}{(i-i(r))r}
  \underset{i \to \infty}{\longrightarrow} \nicefrac{d}{r}
  \]
  and, letting $r \to \infty$, we conclude that \eqref{equ:tends-to-0}
  holds.    
 
  Finally suppose $\mathcal{S} = \mathcal{D}$.  For $i \in \mathbb{N}$
  set $k(i) = \lfloor \log_p i \rfloor \in \mathbb{N}_0$ so that
  $p^{k(i)} \leq i < p^{k(i)+1}$.  By \cite[Th.\ 11.4]{DDMS99}, we
  have $\log_p \lvert G : G_i \rvert \geq i-1$.  On the other hand,
  $H^{p^{k(i)+1}} \subseteq H_{p^{k(i)+1}} \subseteq H_i$ and
  hence $\log_p \lvert H/N : H_iN/N \rvert \leq d(k(i)+1)$.  This
  implies
  \[
  \frac{\log_p \lvert H/N : H_iN/N \rvert}{\log_p \lvert G : G_i
    \rvert} \leq \frac{d(k(i)+1)}{i-1} \underset{i \to
    \infty}{\longrightarrow} 0. \qedhere
  \]
  \end{proof}


Before moving on to the proof of
Theorem~\ref{thm:Zelmanov-characterisation}, we remark that for
$\mathcal{S} = \mathcal{P}$ the statement was already proved (though
stated in a different way) by Barnea and Shalev~\cite{BaSh97}, using a
result of Zelmanov~\cite{Ze92}.  The latter implies that any infinite
finitely generated pro-$p$ group contains an element of infinite order
and hence a subgroup isomorphic to~$\mathbb{Z}_p$.

\begin{proof}[Proof of Theorem~\ref{thm:Zelmanov-characterisation}]
  First we show that (i) implies (ii).  Suppose that $G$ is $p$-adic
  analytic.  Observe that a closed subgroup $H \leq G$ is infinite if
  and only if it has analytic dimension $\dim(H) \geq 1$.  Thus
  Theorem~\ref{BarneaShalevThm} and
  Proposition~\ref{pro:hdim-all-equal} imply that (ii) holds,
  with $c = \nicefrac{1}{\dim(G)}$ if $G$ is infinite.

  Obviously (ii) implies (iii), and (iii) implies (iv) by Zelmanov's
  result~\cite{Ze92}.  It remains to show that (iv) implies~(i).
  Arguing by contraposition, we suppose that $G$ is not $p$-adic
  analytic.  Then $G$ is infinite.  Let $H \leq G$ be such
    that $H \cong \mathbb{Z}_p$.  Then
    Proposition~\ref{pro:p-adic-analytic-qu-0}, for $N=1$,
    implies~$\hdim_G^\mathcal{S}(H) = 0$.
\end{proof}

The following example illustrates that extending
Theorem~\ref{thm:Zelmanov-characterisation} to the lower $p$-series
requires more care: for $\mathcal{S} = \mathcal{L}$,
condition (iv) does not generally imply (i), (ii) or~(iii).

\begin{example} \label{exa:two-SL2s} 
  Consider $G = G_1 \times G_2$, where
  $G_1 = \mathrm{SL}_3^1(\mathbb{F}_p[\![t]\!])$ and
  $G_2 = \mathrm{SL}_3^1(\mathbb{Z}_p)$.  Observe that the lower
  $p$-series $\mathcal{L}$ of $G$ satisfies
  \[
  P_i(G) = P_i(G_1) \times P_i(G_2) =
  \mathrm{SL}_3^i(\mathbb{F}_p[\![t]\!]) \times
  \mathrm{SL}_3^i(\mathbb{Z}_p) \quad \text{for $i \in \mathbb{N}$},
  \]
 where
    \[
    \mathrm{SL}_3^i(\mathbb{F}_p[\![t]\!]) = \{ g \in
    \mathrm{SL}_3(\mathbb{F}_p[\![t]\!]) \mid g \equiv_{t^i} 1 \}
    \quad \text{and} \quad \mathrm{SL}_3^i(\mathbb{Z}_p) = \{ g \in
    \mathrm{SL}_3(\mathbb{Z}_p) \mid g \equiv_{p^i} 1 \}
    \]
    denote the $i$th principal congruence subgroups of
    $\mathrm{SL}_3(\mathbb{F}_p[\![t]\!])$ and
    $\mathrm{SL}_3(\mathbb{Z}_p)$; compare~\cite[Prop.~13.29]{DDMS99}.
    Thus $\log_p \lvert P_i(G) : P_{i+1}(G) \rvert = 8+8 = 16$ for all
    $i \in \mathbb{N}$.  

    For $j \in \{1,2\}$ choose $H_j \leq G_j$ with
    $H_j \cong \mathbb{Z}_p$.  The $p$-power maps in
    $\mathrm{SL}_3(\mathbb{F}_p[\![t]\!])$ and
    $\mathrm{SL}_3(\mathbb{Z}_p)$ behave rather differently: for
    $g \in \mathrm{SL}_3^i(\mathbb{F}_p[\![t]\!])$ we have
    $g^p \in \mathrm{SL}_3^{pi}(\mathbb{F}_p[\![t]\!])$, whereas for
    $g \in \mathrm{SL}_3^i(\mathbb{Z}_p) \smallsetminus
    \mathrm{SL}_3^{i+1}(\mathbb{Z}_p)$
    we have
    $g^p \in \mathrm{SL}_3^{i+1}(\mathbb{Z}_p) \smallsetminus
    \mathrm{SL}_3^{i+2}(\mathbb{Z}_p)$;
    compare \cite[Sec.~4]{BaSh97} and \cite[Prop.~13.22]{DDMS99}. This
    yields
    \[
    \frac{\log_p \lvert H_1 P_{i+1}(G) : P_{i+1}(G) \rvert}{\log_p \lvert
    G : P_{i+1}(G) \rvert} \le \frac{\lfloor \log_p i \rfloor +1}{16 i} \underset{i \to
    \infty}{\longrightarrow} 0
    \]
    and hence
    \[
    \hdim_G^\mathcal{L}(H_1) = 0, \quad \text{whereas} \quad
    \hdim_G^\mathcal{L}(H_2) = \nicefrac{1}{16}. \qedhere
    \]
\end{example}

\begin{proposition} \label{pro:lower-bound-for-lower-p} Let $G$ be
  an infinite $p$-adic analytic pro-$p$ group.  Then
  every closed subgroup $H \leq G$ satisfies
  $\hdim_G^\mathcal{L}(H) \geq \nicefrac{\dim(H)}{\dim(G)^2}$ with
  respect to the lower $p$-series~$\mathcal{L}$.
\end{proposition}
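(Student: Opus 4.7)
The plan is to relate the lower $p$-series $\mathcal{L}$ to the $p$-power series $\mathcal{P}$ by two one-sided containments, and then to apply Theorem~\ref{BarneaShalevThm}.  Write $d = \dim(G)$ and $e = \dim(H)$; the case $e = 0$ (i.e.\ $H$ finite) is immediate, so I will assume $H$ is infinite.  Fix an open normal uniform subgroup $U \trianglelefteq G$ of index~$p^s$.

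Two one-sided comparisons form the backbone.  First, since $P_i(G) \supseteq P_i(U) = U^{p^{i-1}}$ for the uniform subgroup $U$ of dimension~$d$, we obtain
\[
\log_p \lvert G : P_i(G) \rvert \leq s + d(i-1) = di + O(1).
\]
In the other direction: the finite quotient $G/G^{p^k}$ has order at most $\lvert G : U^{p^k} \rvert = p^{s + dk}$, and the lower $p$-series of any finite $p$-group of order $p^n$ has length at most $n$ (each quotient $P_j/P_{j+1}$ being non-trivial as long as $P_j \neq 1$, by a standard coinvariants argument for $p$-group actions on $\mathbb{F}_p$-modules).  Therefore $P_{s+dk+1}(G) \subseteq G^{p^k}$, and setting $k(i) = \lfloor (i-s-1)/d \rfloor$ yields $P_i(G) \subseteq G^{p^{k(i)}}$ for $i$ large, with $k(i) = i/d + O(1)$.

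By Theorem~\ref{BarneaShalevThm} we have $\hdim_G^{\mathcal{P}}(H) = e/d$, so for every $\varepsilon > 0$ and all large $k$,
\[
\log_p \lvert H : H \cap G^{p^k} \rvert \geq (e/d - \varepsilon) \log_p \lvert G : G^{p^k} \rvert,
\]
and the companion bound $\log_p \lvert G : G^{p^k} \rvert \geq dk + O(1)$ follows from $G^{p^k} \subseteq U^{p^{k-a}}$ for $k \geq a$, where $p^a$ bounds the exponent of the finite quotient $G/U$.  Combining with the containment $P_i(G) \subseteq G^{p^{k(i)}}$ gives
\[
\log_p \lvert H P_i(G) : P_i(G) \rvert \geq \log_p \lvert H : H \cap G^{p^{k(i)}} \rvert \geq (e - d\varepsilon)\, k(i) + O_\varepsilon(1) = (e-d\varepsilon)\, i/d + O_\varepsilon(1).
\]
Dividing by the upper bound on $\log_p \lvert G : P_i(G) \rvert$ from the first paragraph and taking $\varliminf$ as $i \to \infty$ yields $\hdim_G^{\mathcal{L}}(H) \geq (e - d\varepsilon)/d^2$ for every $\varepsilon > 0$, whence the desired $\hdim_G^{\mathcal{L}}(H) \geq e/d^2$.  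The main conceptual step, and the one requiring the most care in a formal write-up, is the second containment $P_{s+dk+1}(G) \subseteq G^{p^k}$: it is the polynomial index bound $\lvert G : G^{p^k} \rvert \leq p^{s+dk}$ that forces $P_i(G)$ into $G^{p^{k(i)}}$ with $k(i)$ of order~$i/d$, and this factor~$d$ of slowdown is exactly what accounts for the passage from $e/d$ in the $\mathcal{P}$-spectrum to $e/d^2$ in the lower $p$-series spectrum.
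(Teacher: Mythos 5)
Your argument is correct and follows essentially the same route as the paper's: both sandwich the lower $p$-series between $p$-power filtrations attached to a uniform open normal subgroup, using that $\log_p \lvert G : P_i(G) \rvert$ grows at most like $di$ while $P_i(G)$ is forced into (roughly) the $\lfloor i/d \rfloor$-th term of a $p$-power filtration, which is exactly where the extra factor of $\dim(G)$ comes from. The only cosmetic differences are that you compare against $G^{p^k}$ and quote Theorem~\ref{BarneaShalevThm} with an $\varepsilon$, whereas the paper compares against $U^{p^m}$ directly with additive constants; your explicit justification of $P_{s+dk+1}(G) \subseteq G^{p^k}$ via the length of the lower $p$-series of the finite quotient $G/G^{p^k}$ supplies a detail the paper leaves implicit.
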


\begin{proof}
  Set $d = \dim(G)$ and choose a uniformly powerful open normal
  subgroup $U \trianglelefteq G$.  Let $j \in \mathbb{N}$ be such that
  $G^{p^j} \subseteq P_{j}(G) \subseteq U$.  Writing $G_i = G^{p^i}$
  and $U_i = U^{p^i}$ for the terms of the $p$-power filtrations of
  $G$ and~$U$, we observe that this implies
  \[
  G_i \subseteq U_{i-j} \quad \text{and} \quad P_i(G) \subseteq
  U_{\lfloor (i-j)/d \rfloor} \quad \text{for all $i \in \mathbb{N}$
    with $i \geq j$.}
  \]
  
  Let $H \leq G$ be any closed subgroup; without loss of
  generality we may assume that $H \leq U$.  Then there are
  constants $c_1, c_2 \in \mathbb{N}$ such that for all $i \in
  \mathbb{N}$ with $i \geq j$,
  \[
  \log_p \lvert H P_i(G) : P_i(G) \rvert \geq \log_p \lvert H
  U_{\lfloor (i-j)/d \rfloor} : U_{\lfloor (i-j)/d \rfloor} \rvert
  \geq \dim(H)\cdot\lfloor (i-j)/d \rfloor - c_1
  \]
  and
  \[
  \log_p \lvert G : P_i(G) \rvert = \log_p \lvert G : P_j(G) \rvert +
  \log_p \lvert P_j(G) : P_i(G) \rvert \leq c_2 + d(i-j).
  \]
  This gives
  \[
  \frac{\log_p \lvert H P_i(G) : P_i(G) \rvert}{\log_p \lvert G :
    P_i(G) \rvert} \geq \frac{\dim(H)\cdot \lfloor (i-j)/d \rfloor - c_1}{c_2 + d(i-j)}    \underset{i
    \to \infty}{\longrightarrow} \nicefrac{\dim(H)}{d^2},
  \]
  and we conclude that $\hdim_G^\mathcal{L}(H) \geq \nicefrac{\dim(H)}{d^2}$.
\end{proof}

Perhaps Proposition~\ref{pro:lower-bound-for-lower-p} can
  serve as a first step toward a positive solution of
  Problem~\ref{Problems}~(2) for $\mathcal{S} = \mathcal{L}$.


\section{Characterisation of soluble $p$-adic analytic
  groups} \label{sec:5} 

In this section we prove Theorem~\ref{thm:soluble-interval}
and obtain as an immediate consequence
  Corollary~\ref{cor:characterisation-soluble}.

\begin{lemma} \label{lem:xyz-inequalities} Let
  $x,y,z \in \mathbb{R}_{>0}$ and $\eta \in [0,1]$.

  \begin{enumerate}
  \item[(i)] If $\nicefrac{x}{y}\ge \eta-\nicefrac{1}{y}$, then
    $\nicefrac{x+z}{y+z}\ge \eta - \nicefrac{1}{y+z}$.
  \item[(ii)] If $\nicefrac{x}{y}\ge \eta$, then
    $\nicefrac{x+z}{y+z}\ge \eta$.
  \end{enumerate}
\end{lemma}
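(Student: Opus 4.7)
The plan is to prove both parts by clearing denominators and exploiting the hypothesis $\eta \in [0,1]$, which gives the crucial side inequality $\eta z \le z$ (since $z > 0$). Both statements are elementary manipulations and neither is likely to present any real obstacle; the only point to watch is that $y, y+z > 0$ so that multiplying through preserves the direction of inequalities.

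For part~(ii), I would rewrite the hypothesis $x/y \ge \eta$ as $x \ge \eta y$, and combine it with the trivial bound $z \ge \eta z$ (valid because $\eta \le 1$ and $z > 0$). Adding these two inequalities gives $x + z \ge \eta(y+z)$, and dividing by $y+z > 0$ yields the desired conclusion.

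For part~(i), I would similarly rewrite the hypothesis $x/y \ge \eta - 1/y$ as $x \ge \eta y - 1$. Again combining with $z \ge \eta z$ and adding gives $x + z \ge \eta(y+z) - 1$, which upon dividing by $y+z$ produces $(x+z)/(y+z) \ge \eta - 1/(y+z)$, as required. The hypothesis $\eta \le 1$ is used in exactly the same way in both parts, and the restriction $\eta \ge 0$ is not actually needed for the argument, though it appears in the statement for the natural application (where $\eta$ represents a target Hausdorff dimension).

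Since the proof is so short, I expect the authors simply to remark that both inequalities follow by clearing denominators and using $\eta z \le z$, or possibly to omit the verification entirely. The lemma will presumably be applied in Section~\ref{sec:5} in an inductive step that adds a contribution $(z, z)$ to both numerator and denominator of a quotient $x/y$ which is already controlled from below by~$\eta$ (or by $\eta - 1/y$, with the $-1/y$ slack accumulating to a vanishing error term); the content of the lemma is precisely that such an operation preserves the lower bound on the fraction.
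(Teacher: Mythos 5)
Your proof is correct and is essentially identical to the paper's: both clear denominators, use $\eta\le 1$ to get $z\ge\eta z$, add the two inequalities, and divide by $y+z>0$. Your side remarks (that $\eta\ge 0$ is not needed and that the lemma feeds an inductive step in Section~5) are also accurate.
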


\begin{proof}
  Indeed, $\nicefrac{x}{y}\ge \eta - \nicefrac{1}{y}$ implies
  $x\ge \eta y -1$ and, using $\eta \le 1$, we deduce that
  $x+z\ge \eta y+ \eta z -1$.  This gives
  $\nicefrac{x+z}{y+z}\ge \eta - \nicefrac{1}{y+z}$.  

  Similarly, $\nicefrac{x}{y}\ge \eta$ implies $x \ge \eta y$ and,
  using $\eta \le 1$, we deduce that $x+z \ge \eta y + \eta z$.  This
  gives $\nicefrac{x+z}{y+z}\ge \eta$.
\end{proof}

\begin{proposition} \label{pro:fin-gen-less-eta} Let $G$ be a
  countably based pro-$p$ group which is not finitely generated, and
  let $\mathcal{S} \colon G_0 \supseteq G_1 \supseteq \ldots$ be a
  filtration series of~$G$.  Suppose that $\eta \in [0,1]$ is such that every
  finitely generated subgroup $H$ of $G$ satisfies
  $\hdim_G^\mathcal{S}(H) \le \eta$.  Then there exists a closed subgroup
  $H \le G$ such that $\hdim_G^\mathcal{S}(H) = \eta$.
\end{proposition}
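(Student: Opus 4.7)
I would begin by handling the boundary cases: take $H=1$ when $\eta=0$, and $H=G$ when $\eta=1$ (the latter is a closed subgroup of itself of Hausdorff dimension~$1$). For the remaining case $0<\eta<1$, the plan is to construct $H=\overline{\bigcup_{k\ge1}H_k}$, where $H_1\le H_2\le\cdots$ is an ascending chain of finitely generated subgroups built inductively alongside a strictly increasing sequence $i_0=0<i_1<i_2<\cdots$ of indices, maintaining the following invariants at each stage $k$:
\begin{enumerate}
\item[(A)] every element of $H_k\smallsetminus H_{k-1}$ lies in $G_{i_{k-1}}$;
\item[(B)] $\log\lvert H_kG_{i_k}:G_{i_k}\rvert \le (\eta+1/k)\log\lvert G:G_{i_k}\rvert$;
\item[(C)] $\log\lvert H_kG_i:G_i\rvert \ge (\eta-1/k)\log\lvert G:G_i\rvert$ for every $i$ with $i_{k-1}<i\le i_k$.
\end{enumerate}

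The inductive step proceeds in two phases. Given $H_{k-1}$ and $i_{k-1}$, I first adjoin finitely many new elements of $G_{i_{k-1}}$ to form $H_k$; that such elements exist in sufficient supply uses the hypothesis that $G$, and hence $G_{i_{k-1}}$, is not finitely generated. The new generators are selected so that $H_k$ saturates the quotient $G_{i_{k-1}}/G_I$ substantially for some chosen large $I$, whereupon Lemma~\ref{lem:xyz-inequalities} propagates a density lower bound of $\eta-1/k$ across the entire interval $(i_{k-1},I]$, which will yield~(C) provided $i_k\le I$. Having fixed $H_k$, the standing hypothesis $\hdim_G^\mathcal{S}(H_k)\le\eta$ supplies arbitrarily large indices where $H_k$'s density is at most $\eta+1/k$; choosing one that is larger than $i_{k-1}$ and still lies inside the good interval produces $i_k$ and secures~(B). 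With the construction in place, invariant~(A) forces $HG_{i_k}=H_kG_{i_k}$ for every $k$ (all later generators lie in $G_{i_j}\subseteq G_{i_k}$ for $j\ge k$), so~(B) gives $\varliminf_k\log\lvert HG_{i_k}:G_{i_k}\rvert/\log\lvert G:G_{i_k}\rvert\le\eta$; conversely, for $i$ with $i_{k-1}<i\le i_k$ we have $HG_i\supseteq H_kG_i$, so~(C) gives $\log\lvert HG_i:G_i\rvert/\log\lvert G:G_i\rvert\ge\eta-1/k$, and letting $i\to\infty$ (equivalently $k\to\infty$) yields $\hdim_G^\mathcal{S}(H)\ge\eta$. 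Together, $\hdim_G^\mathcal{S}(H)=\eta$.

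The main obstacle is the simultaneous balancing of invariants~(B) and~(C). The assumption $\hdim(H_k)\le\eta$ is only a statement about $\varliminf$ and provides no direct control of $H_k$'s density at any particular scale; the crux is therefore to arrange the finitely many new generators inside $G_{i_{k-1}}$ so that $H_k$'s density climbs to roughly $\eta$ and remains at least $\eta-1/k$ throughout the interval $(i_{k-1},i_k]$, yet still dips to at most $\eta+1/k$ at the right endpoint. The assumption that $G$ is not finitely generated provides the flexibility to adjoin sufficiently many "independent" elements to saturate the relevant intermediate quotients, and Lemma~\ref{lem:xyz-inequalities} is the technical tool that converts such local saturation into a uniform lower bound across the whole interval.
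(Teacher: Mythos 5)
Your overall architecture---an ascending chain of finitely generated subgroups $H_1\le H_2\le\ldots$ whose new generators at stage $k$ lie in $G_{i_{k-1}}$, so that $HG_{i_k}=H_kG_{i_k}$ for the closed subgroup $H$ generated by the union, with the density of $H$ bounded above along the subsequence $(i_k)$ and below on each interval $(i_{k-1},i_k]$---is precisely the paper's strategy, and your concluding limit computation is sound. The gap is that the inductive step, which is the entire content of the proof, is not carried out, and the mechanism you sketch for it does not work as stated. You propose to secure the lower bound (C) by adjoining one batch of generators that ``saturates $G_{i_{k-1}}/G_I$ substantially'' for some large $I$, and then to extract $i_k$ from the hypothesis $\hdim_G^{\mathcal{S}}(H_k)\le\eta$, which yields arbitrarily large indices where the density of $H_k$ is at most $\eta+1/k$. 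These two requirements pull against each other: a lower limit $\le\eta$ only guarantees that such indices exist beyond every point, with no control over whether any of them lands inside $(i_{k-1},I]$, while genuine saturation of $G_{i_{k-1}}$ modulo $G_I$ forces the density at scales near $I$ up towards $1-\log_p\lvert G:G_{i_{k-1}}\rvert/\log_p\lvert G:G_I\rvert$, i.e.\ towards $1$, not down towards $\eta$. You identify this tension yourself as ``the crux'', but restating the difficulty does not resolve it. (A secondary bookkeeping problem: your tolerance $1/k$ is decoupled from the scale, so even full saturation combined with the stage-$(k-1)$ bound $\eta-1/(k-1)$ at $i_{k-1}$ fails to give $\eta-1/k$ at scales just above $i_{k-1}$ when $\log_p\lvert G:G_{i_{k-1}}\rvert$ is large; the paper's tolerance $1/\log_p\lvert G:G_i\rvert$ is exactly what Lemma~\ref{lem:xyz-inequalities}(i) propagates.)

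What is missing is the paper's one-scale-at-a-time targeting device. One first reduces to the case where every finitely generated subgroup has Hausdorff dimension \emph{strictly} less than $\eta$ (otherwise some finitely generated subgroup already realises $\eta$). Then, at the passage from scale $k-1$ to scale $k$, the subgroups $H_{k-1}G_k/G_k\le H_{k-1}G_{k-1}/G_k$ of the finite $p$-group $G/G_k$ are sandwiched by Lemma~\ref{lem:xyz-inequalities}: the larger has density $\ge\eta-1/m(k)$, and the smaller has density $\le\eta$ either immediately (Case~1) or after waiting---adding no generators---until the density of $H_{k-1}$ drops below $\eta$, which the strict inequality $\hdim_G^{\mathcal{S}}(H_{k-1})<\eta$ guarantees and during which the lower bound holds trivially (Case~2). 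Since a finite $p$-group admits intermediate subgroups of every intermediate order, one can then choose $H_k$ with $H_{k-1}G_k\subseteq H_kG_k\subseteq H_{k-1}G_{k-1}$ and $\log_p\lvert H_kG_k:G_k\rvert$ equal to a prescribed integer $l$ with $\eta-1/m(k)\le l/m(k)\le\eta$. This pins the density at each new scale into the required window \emph{by construction}, rather than by an appeal to saturation plus the lower-limit hypothesis, and it is the step your proposal would need to supply.
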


\begin{proof} 
  Suppose that $\hdim_G^\mathcal{S}(H) < \eta$ for
  every finitely generated subgroup $H$ of~$G$.  In particular this
  implies that~$\eta>0$.  Furthermore, we may assume that
  $G_i \supsetneq G_{i+1}$ for all $i \in \mathbb{N}_0$.  Thus
  \[
  m(i) = \log_p \lvert G : G_i \rvert, \quad i \in
  \mathbb{N}_0,
  \]
  is a strictly increasing sequence in~$\mathbb{N}_0$, and $G/G_i$ is
  a finite $p$-group of order $p^{m(i)}$ for each~$i \in \mathbb{N}$.

  Below we construct recursively an ascending sequence of finitely
  generated closed subgroups
  $1 = H_0 \subseteq H_1 \subseteq H_2 \subseteq \ldots$ such that
  \begin{enumerate}
  \item $H_i G_i = H_{i'} G_i$ for all $i,i' \in \mathbb{N}$ with $i\le i'$;
  \item for all $i \in \mathbb{N}$,
    \[
    \tfrac{1}{m(i)} \log_p \lvert H_i G_i : G_i \rvert \ge \eta -
    \nicefrac{1}{m(i)};
    \]
  \item for infinitely many $i \in \mathbb{N}$,
    \begin{equation} \label{equ:inf-many} \tfrac{1}{m(i)} \log_p
      \lvert H_i G_i : G_i \rvert \le \eta.
    \end{equation}
  \end{enumerate}
 Setting $H = \langle H_0 \cup H_1 \cup \ldots \rangle\le G$, we obtain
 \begin{equation*}
   \hdim_G^\mathcal{S}(H) = \varliminf\,
   \tfrac{1}{m(i)} \log_p \lvert HG_i:G_i \rvert
   = \varliminf \, \tfrac{1}{m(i)} \log_p \lvert H_iG_i:G_i
   \rvert 
   = \eta.
 \end{equation*}

 Thus it remains to construct $H_1\subseteq H_2 \subseteq \ldots$ so
 that (i), (ii) and (iii) hold.  First we choose $l \in \mathbb{N}_0$ such that
 $\eta-\nicefrac{1}{m(1)} \le \nicefrac{l}{m(1)} \le \eta$ and pick a
 finitely generated closed subgroup $H_1 \le G$ such that $H_1 G_1 / G_1$ has
 order $p^l$.  This guarantees, in particular,
 that~\eqref{equ:inf-many} holds for $j=1$.

 Now suppose that finitely generated closed subgroups of $G$,
 \[
 1 = H_0 \subseteq H_1 \subseteq \ldots \subseteq H_j, \qquad
 \text{for some $j \in \mathbb{N}$,}
 \]
 have been constructed such that (i) and (ii) already hold for indices
 $i,i' \le j$ and such that \eqref{equ:inf-many} holds for $i=j$.  We
 manufacture finitely generated closed subgroups $H_{j+1},\ldots, H_k$
 of~$G$, for a suitable $k \in \mathbb{N}$ with $j < k$, to obtain an
 extended ascending chain
 \[
 1 = H_0 \subseteq H_1 \subseteq \ldots \subseteq H_k
 \]
 such that (i) and (ii) hold for indices $i,i' \le k$ and such that
 \eqref{equ:inf-many} holds for~$i=k$.

 \smallskip

 \noindent
 \underline{Case 1}. Suppose that
 \[
 \tfrac{1}{m(j+1)} \log_p \lvert H_j G_{j+1} : G_{j+1} \rvert \leq \eta.
 \]
 Using Lemma~\ref{lem:xyz-inequalities}(i), we observe
 that
 \begin{equation*}
   \tfrac{1}{m(j+1)} \log_p \lvert H_j G_j  : G_{j+1} \rvert
    = \frac{\log_p \lvert H_j G_j : G_j \rvert + \log_p \lvert G_j :
     G_{j+1} \rvert}{\log_p \lvert G : G_j \rvert + \log_p \lvert G_j :
     G_{j+1} \rvert} \ge \eta- \nicefrac{1}{m(j+1)}.
 \end{equation*}
 We may now take $k = j+1$ and finish the proof as follows.  Writing
 $l' = \log_p \lvert H_jG_k : G_k \rvert$ and
 $l'' = \log_p \lvert H_jG_{k-1} : G_k \rvert$, we find
 $l \in \mathbb{N}_0$ such that
 \[
 l' \le l \le l'' \qquad \text{and} \qquad \eta - \nicefrac{1}{m(j+1)}
 \le \nicefrac{l}{m(j+1)} \le \eta.
 \]
 Since $G/G_k$ is a finite $p$-group, we further find a
 finitely generated closed subgroup $H_k \le G$ with
 $H_j \subseteq H_k$ satisfying
 \[
 H_jG_k \subseteq H_k G_k \subseteq H_j G_{k-1} \qquad
 \text{and} \qquad \log_p \lvert H_k G_k : G_k \rvert = l.
 \]
 
 \smallskip

 \noindent
 \underline{Case 2}. Suppose that
 \[
 \tfrac{1}{m(j+1)} \log_p \lvert H_j G_{j+1} : G_{j+1} \rvert > \eta.
 \]
 Since $H_j$ is finitely generated, our hypotheses give
 $\hdim_G^\mathcal{S}(H_j) < \eta$.  Choose $k \in \mathbb{N}$
 with $k> j+1$ minimal subject to the condition
 \[
 \tfrac{1}{m(k)} \log_p \lvert H_j G_k : G_k \rvert < \eta.
 \]
 Setting $H_{j+1}=H_{j+2}=\ldots =H_{k-1}=H_j$, we observe that (i)
 and (ii) certainly hold for indices $i,i' \leq k-1$.  Using
 Lemma~\ref{lem:xyz-inequalities}(ii), we observe that
 \begin{equation*}
   \tfrac{1}{m(k)} \log_p \lvert H_j G_{k-1}  : G_k \rvert =
   \frac{\log_p \lvert H_j G_{k-1} : G_{k-1} \rvert + \log_p \lvert
     G_{k-1} : G_k \rvert}{\log_p \lvert G : G_{k-1} \rvert + \log_p \lvert G_{k-1} :
     G_k \rvert} \ge \eta \ge \eta - \nicefrac{1}{m(k)}.
 \end{equation*}
 We may now conclude the proof exactly as in Case~1.
\end{proof}

\begin{lemma} \label{LemmaB} Let $G$ be a countably based profinite
  group, and let
  $\mathcal{S} \colon G_0 \supseteq G_1 \supseteq \ldots$ be a
  filtration series of~$G$.  Let $H \le G$ be a closed subgroup such that
  $\hdim_G^\mathcal{S}(H)$ is given by a proper limit, that is
  \[
  \hdim_G^\mathcal{S}(H) = \lim_{i\rightarrow \infty} \frac{\log_p
    \lvert H G_i : G_i \rvert}{\log_p \lvert G : G_i \rvert}.
  \]
  Let $\mathcal{S} \vert_H \colon H = H_0 \supseteq H_1 \supseteq \ldots$,
  where $H_i = H \cap G_i$ for $i \in \mathbb{N}_0$, denote the
  induced filtration series of~$H$.
  Then for every $B\le H$,
  \[
  \hdim_G^\mathcal{S}(B) = \hdim_G^\mathcal{S}(H) \cdot
  \hdim_H^{\mathcal{S} \vert_H}(B).
  \]
\end{lemma}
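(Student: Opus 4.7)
The plan is to exploit the multiplicativity of group indices, factor the defining ratio into two pieces, one of which runs through the $B$-indices inside $H$ with respect to $\mathcal{S}\vert_H$ and the other of which is precisely the sequence converging to $\hdim_G^\mathcal{S}(H)$, and then use that the latter has a proper limit to split the $\varliminf$ of the product.

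The starting observation is the index identity. Since $B \le H$, the second isomorphism theorem gives $B G_i / G_i \cong B/(B \cap G_i) = B/(B \cap H_i)$ and similarly $BH_i/H_i \cong B/(B \cap H_i)$. Hence
\[
\log_p \lvert BG_i : G_i \rvert = \log_p \lvert BH_i : H_i \rvert,
\]
and likewise $\log_p \lvert HG_i : G_i \rvert = \log_p \lvert H : H_i \rvert$. Consequently, for every $i \in \mathbb{N}_0$ with $HG_i \ne G_i$,
\[
\frac{\log_p \lvert B G_i : G_i \rvert}{\log_p \lvert G : G_i \rvert}
= \frac{\log_p \lvert B H_i : H_i \rvert}{\log_p \lvert H : H_i \rvert}
\cdot \frac{\log_p \lvert H G_i : G_i \rvert}{\log_p \lvert G : G_i \rvert}.
\]

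Next I would take $\varliminf_{i\to\infty}$ of this identity. By the hypothesis that $\hdim_G^\mathcal{S}(H)$ is attained as a proper limit, the second factor on the right tends to $\hdim_G^\mathcal{S}(H)$. The first factor, read as a ratio of indices inside $H$ with respect to the induced filtration $\mathcal{S}\vert_H$, has $\varliminf$ equal to $\hdim_H^{\mathcal{S}\vert_H}(B)$ by the formula~\eqref{equ:hdim-formula}. Invoking the elementary rule that $\varliminf(a_i b_i) = (\varliminf a_i) \cdot (\lim b_i)$ whenever $a_i \ge 0$ and $b_i \to b \in [0,\infty)$ properly, I obtain
\[
\hdim_G^\mathcal{S}(B) = \hdim_H^{\mathcal{S}\vert_H}(B) \cdot \hdim_G^\mathcal{S}(H),
\]
which is the desired equality.

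The only real subtleties are cosmetic. First, one must check that $\mathcal{S}\vert_H$ genuinely is a filtration series of~$H$: the $H_i = H \cap G_i$ are open and normal in~$H$, and $\bigcap_i H_i = H \cap \bigcap_i G_i = 1$; if $H$ happens to be finite, the backup definition of Hausdorff dimension from the introduction handles the edge case trivially. Second, if $\hdim_G^\mathcal{S}(H) = 0$, the product formula forces $\hdim_G^\mathcal{S}(B) = 0$, which is consistent with the monotonicity $\hdim_G^\mathcal{S}(B) \le \hdim_G^\mathcal{S}(H)$. Third, since the second factor converges to a nonnegative limit, one must verify the $\varliminf$-factorisation rule cleanly (it follows at once from the definition of $\varliminf$ together with the ordinary multiplicativity of limits). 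I anticipate no serious obstacle beyond these bookkeeping points; the whole argument hinges on the proper-limit hypothesis, without which the $\varliminf$ of a product could be strictly smaller than the product of the $\varliminf$s.
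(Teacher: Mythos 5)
Your proof is correct and follows essentially the same route as the paper: the identical factorisation of $\tfrac{\log_p \lvert B G_i : G_i \rvert}{\log_p \lvert G : G_i \rvert}$ into the ratio inside $H$ times the sequence converging properly to $\hdim_G^\mathcal{S}(H)$, followed by taking lower limits. The extra bookkeeping you flag (boundedness of the first factor by $1$, which is what justifies the $\varliminf$-splitting even when the limit is $0$, and the $0/0$ convention) is consistent with the paper's conventions and adds nothing beyond what the paper leaves implicit.
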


\begin{proof}
  We observe that for $i \in \mathbb{N}_0$,
  \[
  \frac{\log_p \lvert B G_i : G_i \rvert}{\log_p \lvert G : G_i
    \rvert} = \frac{\log_p \lvert B H_i : H_i \rvert}{\log_p \lvert H
    : H_i \rvert} \frac{\log_p \lvert H : H_i \rvert}{\log_p \lvert G
    : G_i \rvert}.
 \]
 The claim follows by taking lower limits.
\end{proof}


\begin{theorem} \label{thm:eta-xi-interval} Let $G$ be a countably
  based pro-$p$ group, and let
  $\mathcal{S} \colon G_0 \supseteq G_1 \supseteq \ldots$ be a
  filtration series of~$G$.  Let $H \le G$ be a closed subgroup such
  that $\xi = \hdim_G^\mathcal{S}(H) > 0$ is given by a proper limit,
  that is
  \[
  \xi = \lim_{i\rightarrow \infty}\, \frac{\log_p \lvert H G_i :
    G_i \rvert}{\log_p \lvert G : G_i \rvert}.
  \]
  Suppose further that $\eta \in [0,\xi)$ is such that every finitely
  generated closed subgroup $K \le H$ satisfies
  $\hdim_G^\mathcal{S}(K) \le \eta$.  Then
  \[
  (\eta, \xi]\subseteq \hspec^\mathcal{S}(G).
  \]
\end{theorem}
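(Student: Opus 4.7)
The plan is to reduce the problem to the induced filtration series on $H$ and then apply Proposition~\ref{pro:fin-gen-less-eta}. First I observe that $H$ itself cannot be finitely generated: otherwise the standing hypothesis would force $\xi = \hdim_G^\mathcal{S}(H) \le \eta$, contradicting $\eta < \xi$. Thus $H$ is a countably based pro-$p$ group that is not finitely generated, and the induced series $\mathcal{S} \vert_H \colon H_i = H \cap G_i$ is a genuine filtration series of $H$ (each $H_i$ is open normal in $H$ and $\bigcap_i H_i = 1$), so Proposition~\ref{pro:fin-gen-less-eta} is available.

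Now fix $\nu \in (\eta, \xi]$; the goal is to produce a closed subgroup $B \le G$ with $\hdim_G^\mathcal{S}(B) = \nu$. Because $\xi$ is given by a proper limit, Lemma~\ref{LemmaB} applies and yields, for every closed $B \le H$,
\[
\hdim_G^\mathcal{S}(B) = \xi \cdot \hdim_H^{\mathcal{S}\vert_H}(B).
\]
Hence it suffices to exhibit a closed $B \le H$ with $\hdim_H^{\mathcal{S}\vert_H}(B) = \nu / \xi$.

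To produce such a $B$, I apply Proposition~\ref{pro:fin-gen-less-eta} to the countably based pro-$p$ group $H$ with the filtration $\mathcal{S} \vert_H$ and target value $\nu/\xi \in (\eta/\xi, 1]$. The proposition's hypothesis requires that every finitely generated closed subgroup $K \le H$ satisfies $\hdim_H^{\mathcal{S}\vert_H}(K) \le \nu/\xi$. But by the theorem's hypothesis $\hdim_G^\mathcal{S}(K) \le \eta$ for such $K$, and rescaling via Lemma~\ref{LemmaB} gives $\hdim_H^{\mathcal{S}\vert_H}(K) \le \eta/\xi < \nu/\xi$, so the hypothesis is comfortably satisfied. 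The proposition then produces a closed $B \le H$ with $\hdim_H^{\mathcal{S}\vert_H}(B) = \nu/\xi$, and a final application of Lemma~\ref{LemmaB} gives $\hdim_G^\mathcal{S}(B) = \xi \cdot (\nu/\xi) = \nu$, as required.

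There is no substantive obstacle in this argument: the proof is essentially a two-step chain, where Lemma~\ref{LemmaB} (available precisely because the Hausdorff dimension of $H$ is attained as a proper limit) transports the question to $(H,\mathcal{S}\vert_H)$, and Proposition~\ref{pro:fin-gen-less-eta} then fills in the whole interval $(\eta/\xi,1]$ of achievable Hausdorff dimensions inside $H$. The only point requiring care is the verification that $H$ is not finitely generated, which is immediate from the strict separation $\eta < \xi$.
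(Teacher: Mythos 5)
Your argument is correct and follows essentially the same route as the paper: reduce to $(H,\mathcal{S}\vert_H)$ via Lemma~\ref{LemmaB} (using that $\xi$ is a proper limit), verify that $H$ is not finitely generated, and invoke Proposition~\ref{pro:fin-gen-less-eta} with target $\nu/\xi$ before rescaling back. The only cosmetic difference is that the paper treats $\vartheta\in(\eta,\xi)$ and leaves the endpoint $\xi=\hdim_G^{\mathcal{S}}(H)$ implicit, whereas you fold $\nu=\xi$ into the same application of the proposition; both are fine.
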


\begin{proof}
  Let $\vartheta \in \mathbb{R}$ with $\eta <\vartheta <\xi$.  Let
  $\mathcal{S} \vert_H \colon H_0 \supseteq H_1 \supseteq \ldots$,
  where $H_i = H \cap G_i$ for $i \in \mathbb{N}_0$, denote the
  induced filtration series of~$H$.  By Lemma~\ref{LemmaB}, every
  finitely generated closed subgroup $K$ of $H$ satisfies
  $\hdim_H^{\mathcal{S} \vert_H}(K) \le \eta / \xi < \vartheta / \xi$.
  As $H$ is not finitely generated,
  Proposition~\ref{pro:fin-gen-less-eta} shows: there is a closed
  subgroup $B \le H$ such that
  $\hdim_H^{\mathcal{S} \vert_H}(B) = \vartheta / \xi$.  Using
  Lemma~\ref{LemmaB}, we deduce that
  \[
  \hdim_G^\mathcal{S}(B) =\hdim_G^\mathcal{S}(H) \cdot
  \hdim_H^{\mathcal{S} \vert_H}(B) = \vartheta. \qedhere
  \]
\end{proof}

\begin{proof}[Proof of Theorem~\ref{thm:soluble-interval}]
  Suppose that the finitely generated soluble pro-$p$ group $G$ is not
  $p$-adic analytic.  Consider the
  derived series of $G$, consisting of $G^{(0)} = G$ and
  $G^{(j)} = [G^{(j-1)},G^{(j-1)}]$ for $j \in \mathbb{N}$.

  Observe that $\hdim^\mathcal{S}_G(G^{(0)}) = 1$ and $\hdim^\mathcal{S}_G(G^{(j)}) = 0$ for
  all sufficiently large~$j$.  Hence there is a maximal integer
  $k \ge 0$ such that $\hdim^\mathcal{S}_G(G^{(k)}) = 1$.  We want to apply
  Theorem~\ref{thm:eta-xi-interval} for the closed
    subgroup $G^{(k)}$.  Quite trivially,
  \[
  \frac{\log_p \lvert G^{(k)} G_i : G_i \rvert}{\log_p \lvert G : G_i
    \rvert} \le 1 \qquad \text{for all $i \in \mathbb{N}_0$},
  \]
  where $G_i$ denotes the $i$th term of the given filtration
  series~$\mathcal{S}$.  Hence $\hdim^\mathcal{S}_G(G^{(k)})$ is actually given by
  a proper limit, that is
  \[
  \hdim^\mathcal{S}_G (G^{(k)}) = \lim_{i\rightarrow \infty} \frac{\log_p
    \lvert G^{(k)} G_i : G_i \rvert}{\log_p \lvert G : G_i \rvert}.
  \]

  Put $\eta = \hdim^\mathcal{S}_G(G^{(k+1)}) < 1$, and let
    $K \le G^{(k)}$ be any finitely generated closed subgroup. Then
    Proposition~\ref{pro:p-adic-analytic-qu-0}, applied to
    $N = G^{(k+1)}$ and $H = KN$, yields
    \[
    \hdim_G^\mathcal{S}(K) \leq \hdim_G^\mathcal{S}(H) =
    \hdim_G^\mathcal{S}(N) = \eta.
    \]
    Hence all conditions in Theorem~\ref{thm:eta-xi-interval} are satisfied.
\end{proof}


\end{document}